\theoremstyle{plain}
\newtheorem{theorem}{Theorem}
\newtheorem{lemma}{Lemma}
\newtheorem{corollary}{Corollary}
\theoremstyle{definition}
\newtheorem{definition}{Definition}
\theoremstyle{remark}
\newtheorem{remark}{Remark}
\numberwithin{equation}{section} 
\begin{document}
\title[On a new generalization of Fibonacci hybrid numbers]{On a new
generalization of Fibonacci hybrid numbers}
\author{Elif TAN}
\address{Department of Mathematics, Ankara University, Science Faculty,
06100 Tandogan, Ankara, Turkey}
\email{etan@ankara.edu.tr}
\author{N. Rosa Ait-Amrane}
\address{USTHB, Faculty of Mathematics, RECITS Laboratory, 16111
Bab-Ezzouar, Algiers, Algeria}
\email{rosamrane@gmail.com}
\subjclass[2000]{ 11B39, 05A15, 11K31}
\keywords{Fibonacci sequence, bi-periodic Horadam sequence, Horadam hybrid
number, hybrid number. }

\begin{abstract}
The hybrid numbers were introduced by Ozdemir \cite{ozdemir} as a new
generalization of complex, dual, and hyperbolic numbers. A hybrid number is
defined by $k=a+bi+c\epsilon +dh$, where $a,b,c,d$ are real numbers and $%
i,\epsilon ,h$ are operators such that $i^{2}=-1,\epsilon ^{2}=0,h^{2}=1$
and $ih=-hi=\epsilon +i$. This work is intended as an attempt to introduce
the bi-periodic Horadam hybrid numbers which generalize the classical
Horadam hybrid numbers. We give the generating function, the Binet formula,
and some basic properties of these new hybrid numbers. Also, we investigate
some relationships between generalized bi-periodic Fibonacci hybrid numbers
and generalized bi-periodic\ Lucas hybrid numbers.
\end{abstract}

\maketitle

\section{Introduction}

Non-commutative algebras play important role and have broad applications in
many areas, such as mathematics and physics. Hence it is worth to study and
investigate the properties of some special types of non-commutative
algebras. The real quaternion algebra is the first non-commutative division
algebra to be discovered and defined by 
\begin{equation*}
\mathbb{H}=\{a+bi+cj+dk\mid i^{2}=j^{2}=k^{2}=-1,ij=-ji=k\}
\end{equation*}%
where $a,b,c,d\in 
\mathbb{R}
.$ For a survey on the properties of quaternions, we refer to Hamilton's
book \cite{hamilton}, and for some special type of quaternions see \cite%
{horadam, halici}.

A new non-commutative number system, the hybrid numbers, were introduced by
Ozdemir \cite{ozdemir} as a generalization of complex numbers, dual numbers,
and hyperbolic numbers which are having the form $a+be$ with $%
e^{2}=-1,e^{2}=0,$ and $e^{2}=1,$ respectively. The set of hybrid numbers
are defined as%
\begin{equation}
\mathbb{K}=\left\{ a+bi+c\epsilon +dh\mid i^{2}=-1,\epsilon
^{2}=0,h^{2}=1,ih=-hi=\epsilon +i\right\}  \label{h}
\end{equation}%
where $a,b,c,d\in 
\mathbb{R}
.$

The addition, substraction and multiplication of two hybrid numbers $%
k_{1}=a_{1}+b_{1}i+c_{1}\epsilon +d_{1}h$ and $k_{2}=a_{2}+b_{2}i+c_{2}%
\epsilon +d_{2}h$ are defined as%
\begin{equation*}
k_{1}\pm k_{2}=\left( a_{1}\pm a_{2}\right) +\left( b_{1}\pm b_{2}\right)
i+\left( c_{1}\pm c_{2}\right) \epsilon +\left( d_{1}\pm d_{2}\right) h,
\end{equation*}%
\begin{eqnarray*}
k_{1}k_{2} &=&a_{1}a_{2}-b_{1}b_{2}+d_{1}d_{2}+b_{1}c_{2}+c_{1}b_{2} \\
&&+\left( a_{1}b_{2}+b_{1}a_{2}+b_{1}d_{2}-d_{1}b_{2}\right) i \\
&&+\left(
a_{1}c_{2}+c_{1}a_{2}+b_{1}d_{2}-d_{1}b_{2}+d_{1}c_{2}-c_{1}d_{2}\right)
\epsilon \\
&&+\left( a_{1}d_{2}+d_{1}a_{2}+c_{1}b_{2}-b_{1}c_{2}\right) h.\text{ \ \ \
\ \ \ \ \ \ \ \ \ \ \ \ \ \ \ \ \ }
\end{eqnarray*}

The multiplication of a hybrid number $k=a+bi+c\epsilon +dh$ by the real
scalar $s$ is defined as%
\begin{equation*}
sk=sa+sbi+sc\epsilon +sdh,
\end{equation*}%
and the norm of a hybrid number $k$ is defined by%
\begin{equation*}
\left\Vert k\right\Vert :=\sqrt{\left\vert C\left( k\right) \right\vert },
\end{equation*}%
where $C\left( k\right) :=k\overline{k}$ is the character of the hybrid
number $k$ and $\overline{k}:=a-bi-c\epsilon -dh$ is the conjugate of $k.$
Ozdemir's paper \cite{ozdemir} serves as an excellent reference to the
algebraic and geometric properties of the hybrid numbers.

Recently, many studies have been devoted to hybrid numbers whose components
are taken from special integer sequences such as Fibonacci, Lucas, Pell,
Jacobsthal sequences, etc. In particular, Szynal-Liana \cite{szynal3}
introduced the Horadam hybrid numbers as 
\begin{equation}
\mathbb{K}_{W,n}=W_{n}+W_{n+1}i+W_{n+2}\epsilon +W_{n+3}h  \label{H}
\end{equation}%
where $\left\{ W_{n}\right\} $ is the Horadam sequence defined by $%
W_{n}=pW_{n-1}-qW_{n-2}$ with arbitrary initial values $W_{0},W_{1}.$ In 
\cite{szynal, szynal1, szynal2, szynal3}, the authors studied several
properties of special type of hybrid numbers. The basic properties of k-Pell
hybrid numbers were investigated by Catarino \cite{catarino}. Also, Morales 
\cite{morales} considered the $\left( p,q\right) $-Fibonacci and $\left(
p,q\right) $-Lucas hybrid numbers and gave several relations between them.
Recently, motivated by the Szynal-Liana's paper, Senturk et al. \cite%
{senturk} derived summation formulas, matrix representations, general
bilinear formula, Honsberger formula, etc. regarding to the Horadam hybrid
numbers.

This work has been intended as an attempt to introduce a new generalization
of Horadam hybrid numbers, called as, bi-periodic Horadam hybrid numbers.
The bi-periodic Horadam hybrid numbers generalize the best known hybrid
numbers in the literature, such as Horadam hybrid numbers, Fibonacci\&Lucas
hybrid numbers, $k$-Pell hybrid numbers, Pell\&Pell-Lucas hybrid numbers,
Jacobsthal\&Jacobsthal-Lucas hybrid numbers, etc. The components of the
bi-periodic Horadam hybrid numbers belong to the bi-periodic Horadam
sequence $\left\{ w_{n}\right\} $ which is defined by the recurrence relation%
\begin{equation}
w_{n}=\chi \left( n\right) w_{n-1}+cw_{n-2},\text{ }n\geq 2  \label{1}
\end{equation}%
where $\chi \left( n\right) =a$ if $n$ is even, $\chi \left( n\right) =b$ if 
$n$ is odd with arbitrary initial conditions $w_{0},w_{1}$ and nonzero real
numbers $a,b$ and $c.$ It is clear that if we take $a=b=p$ and $c=-q$, then
it reduces to the classical Horadam sequence in \cite{horadam1}. For the
details of the bi-periodic Horadam sequences see \cite{yayenie, edson,
bilgici, sahin}.

The outline of this paper is as follows: In the rest of this section, we
give some necessary definitions and mathematical preliminaries, which is
required. In Section 2, we introduce the bi-periodic Horadam hybrid numbers
and give the generating function, the Binet formula, matrix representation
and several basic properties of these hybrid numbers such as Vajda's
identity, Catalan's identity, Cassini's identity, summation and binomial sum
formulas. In Section 3, we give some relationships between the generalized
bi-periodic Fibonacci hybrid numbers and the generalized bi-periodic Lucas
hybrid numbers. The final section is devoted to the conclusions.

The Binet formula for the bi-periodic Horadam sequence $\left\{
w_{n}\right\} $ is%
\begin{equation}
w_{n}=\frac{a^{\xi \left( n+1\right) }}{\left( ab\right) ^{\left\lfloor 
\frac{n}{2}\right\rfloor }}\left( A\alpha ^{n}-B\beta ^{n}\right) ,
\label{2}
\end{equation}%
where%
\begin{equation}
A:=\frac{w_{1}-\frac{\beta }{a}w_{0}}{\alpha -\beta }\text{ and }B:=\frac{%
w_{1}-\frac{\alpha }{a}w_{0}}{\alpha -\beta }.  \label{3}
\end{equation}%
Here $\alpha $ and $\beta $ are the roots of the polynomial $x^{2}-abx-abc,$
that is, $\alpha =\frac{ab+\sqrt{a^{2}b^{2}+4abc}}{2}$ and $\beta =\frac{ab-%
\sqrt{a^{2}b^{2}+4abc}}{2}$, and $\xi \left( n\right) =n-2\left\lfloor \frac{%
n}{2}\right\rfloor $ is the parity function, i.e., $\xi \left( n\right) =0$
when $n$ is even and $\xi \left( n\right) =1$ when $n$ is odd. Let assume $%
a^{2}b^{2}+4abc>0.$ Also we have $\alpha +\beta =ab,$ $\Delta :=$ $\alpha
-\beta =\sqrt{a^{2}b^{2}+4abc}$ and $\alpha \beta =-abc.$ If we take the
initial conditions $0$ and $1,$ we get the Binet formula of the generalized
bi-periodic Fibonacci sequence $\left\{ u_{n}\right\} $ as 
\begin{equation}
u_{n}=\frac{a^{\xi \left( n+1\right) }}{\left( ab\right) ^{\left\lfloor 
\frac{n}{2}\right\rfloor }}\left( \frac{\alpha ^{n}-\beta ^{n}}{\alpha
-\beta }\right)  \label{u}
\end{equation}%
and by taking the initial conditions $2$ and $b,$ we get the Binet formula
of the generalized bi-periodic Lucas sequence $\left\{ v_{n}\right\} $ as%
\begin{equation}
v_{n}=\frac{a^{-\xi \left( n\right) }}{\left( ab\right) ^{\left\lfloor \frac{%
n}{2}\right\rfloor }}\left( \alpha ^{n}+\beta ^{n}\right) .  \label{v}
\end{equation}

The bi-periodic Horadam numbers for negative subscripts is defined as%
\begin{equation}
\left( -c\right) ^{n}w_{-n}=\left( \frac{b}{a}\right) ^{\xi \left( n\right)
}w_{0}u_{n+1}-w_{1}u_{n}.  \label{n}
\end{equation}

Also we have 
\begin{equation}
\alpha ^{m}=a^{-1}a^{\frac{m+\xi (m)}{2}}b^{\frac{m-\xi (m)}{2}}\alpha
u_{m}+ca^{\frac{m-\xi (m)}{2}}b^{\frac{m+\xi (m)}{2}}u_{m-1}  \label{*}
\end{equation}%
and%
\begin{equation}
\beta ^{m}=a^{-1}a^{\frac{m+\xi (m)}{2}}b^{\frac{m-\xi (m)}{2}}\beta
u_{m}+ca^{\frac{m-\xi (m)}{2}}b^{\frac{m+\xi (m)}{2}}u_{m-1}.  \label{**}
\end{equation}%
For details, see \cite{tan, tan1}.

\section{The bi-periodic Horadam hybrid numbers}

\begin{definition}
For $n\geq 0$, the bi-periodic Horadam hybrid number $\mathbb{K}_{w,n}$ is
defined by the recurrence relation 
\begin{equation*}
\mathbb{K}_{w,n}=w_{n}+w_{n+1}i+w_{n+2}\epsilon +w_{n+3}h
\end{equation*}%
where $w_{n}$ is the $n$-th bi-periodic Horadam number.
\end{definition}

From the definition of bi-periodic Horadam hybrid numbers, we have 
\begin{eqnarray*}
\mathbb{K}_{w,0} &=&w_{0}+w_{1}i+\left( aw_{1}+cw_{0}\right) \epsilon
+(\left( ab+c\right) w_{1}+bcw_{0})h, \\
\mathbb{K}_{w,1} &=&w_{1}+\left( aw_{1}+cw_{0}\right) i+(\left( ab+c\right)
w_{1}+bcw_{0})\epsilon \\
&&+(a\left( ab+2c\right) w_{1}+c\left( ab+c\right) w_{0})h.
\end{eqnarray*}

In the following table we state several number of hybrid numbers in terms of
the bi-periodic Horadam hybrid numbers $\mathbb{K}_{w,n}$ according to the
initial conditions $w_{0},w_{1}$ and the related coefficients $a,b,c.$

\begin{equation*}
\begin{array}{c}
\\ 
\begin{tabular}{|l|l|l|}
\hline
$\mathbf{K}_{w,n}$ & $\left( \mathbf{w}_{0}\mathbf{,w}_{1}\mathbf{;a,b,c}%
\right) $ & \textbf{bi-periodic Horadam hybrid numbers} \\ \hline
$\mathbb{K}_{u,n}$ & $\left( 0,1;a,b,c\right) $ & gen. bi-periodic Fibonacci
hybrid numbers \\ \hline
$\mathbb{K}_{v,n}$ & $\left( 2,b;a,b,c\right) $ & gen. bi-periodic Lucas
hybrid numbers \\ \hline
$\mathbb{K}_{W,n}$ & $\left( W_{0},W_{1};p,p,-q\right) $ & Horadam hybrid
numbers \cite{szynal3, senturk} \\ \hline
$\mathbb{K}_{U,n}$ & $\left( 0,1;p,p,q\right) $ & $\left( p,q\right) $%
-Fibonacci hybrid numbers \cite{morales} \\ \hline
$\mathbb{K}_{V,n}$ & $\left( 2,p;p,p,q\right) $ & $\left( p,q\right) $-Lucas
hybrid numbers \cite{morales} \\ \hline
$\mathbb{K}_{F,n}$ & $\left( 0,1;1,1,1\right) $ & Fibonacci hybrid numbers 
\cite{szynal} \\ \hline
$\mathbb{K}_{L,n}$ & $\left( 2,1;1,1,1\right) $ & Lucas hybrid numbers \cite%
{szynal3} \\ \hline
$\mathbb{K}_{P,n}$ & $\left( 0,1;2,2,1\right) $ & Pell hybrid numbers \cite%
{szynal1} \\ \hline
$\mathbb{K}_{Q,n}$ & $\left( 2,2;2,2,1\right) $ & Pell-Lucas hybrid numbers 
\cite{szynal1} \\ \hline
$\mathbb{K}_{kP,n}$ & $\left( 0,1;2,2,k\right) $ & k-Pell hybrid numbers 
\cite{catarino} \\ \hline
$\mathbb{K}_{J,n}$ & $\left( 0,1;1,1,2\right) $ & Jacobsthal hybrid numbers 
\cite{szynal2} \\ \hline
$\mathbb{K}_{j,n}$ & $\left( 2,1;1,1,2\right) $ & Jacobsthal-Lucas hybrid
numbers \cite{szynal2} \\ \hline
\end{tabular}
\\ 
\\ 
Table\text{ }1\label{Table1}:\text{Special cases of the sequence }\{\mathbb{K%
}_{w,n}\} \\ 
\\ 
\end{array}%
\end{equation*}%
The norm of the $n$-th bi-periodic Horadam hybrid number $\mathbb{K}_{w,n}$
is $\left\Vert \mathbb{K}_{w,n}\right\Vert :=\sqrt{\left\vert C\left( 
\mathbb{K}_{w,n}\right) \right\vert }.$ Here $C\left( \mathbb{K}%
_{w,n}\right) $ is the character of the $n$-th bi-periodic Horadam hybrid
number $\mathbb{K}_{w,n}$ and defined by%
\begin{equation}
C\left( \mathbb{K}_{w,n}\right) =\mathbb{K}_{w,n}\overline{\mathbb{K}_{w,n}}%
=w_{n}^{2}+\left( w_{n+1}-w_{n+2}\right) ^{2}-w_{n+2}^{2}-w_{n+3}^{2},
\label{4}
\end{equation}%
where $\overline{\mathbb{K}_{w,n}}$ $:=w_{n}-w_{n+1}i-w_{n+2}\epsilon
-w_{n+3}h$ is the conjugate of the bi-periodic Horadam hybrid number.

\begin{theorem}
The generating function for the bi-periodic Horadam hybrid sequence $\left\{ 
\mathbb{K}_{w,n}\right\} $ is%
\begin{equation*}
G\left( x\right) =\frac{\left( 1-\left( ab+c\right) x^{2}+bcx^{3}\right) 
\mathbb{K}_{w,0}+x\left( 1+ax-cx^{2}\right) \mathbb{K}_{w,1}}{1-\left(
ab+2c\right) x^{2}+c^{2}x^{4}}.
\end{equation*}
\end{theorem}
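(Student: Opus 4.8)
The plan is to treat $G(x)=\sum_{n\ge 0}\mathbb{K}_{w,n}x^{n}$ as a formal power series with hybrid coefficients and a commuting indeterminate $x$, and to recover the claimed rational form in the usual way: first pin down the constant-coefficient linear recurrence satisfied by $\{\mathbb{K}_{w,n}\}$, which dictates the denominator, and then read off the numerator from the finitely many boundary terms that survive.

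First I would establish the homogeneous recurrence
\begin{equation*}
\mathbb{K}_{w,n}=(ab+2c)\,\mathbb{K}_{w,n-2}-c^{2}\,\mathbb{K}_{w,n-4},\qquad n\ge 4 .
\end{equation*}
Since each slot of the hybrid number is a single bi-periodic Horadam number, it suffices to prove $w_{m}=(ab+2c)w_{m-2}-c^{2}w_{m-4}$ for every $m\ge 4$ and then apply it simultaneously in the four slots $m=n,n+1,n+2,n+3$. This scalar identity follows directly from the Binet formula \eqref{2}: shifting $n\mapsto n-2$ multiplies the prefactor $a^{\xi(n+1)}(ab)^{-\lfloor n/2\rfloor}$ exactly by $ab$ (the parity $\xi(n+1)$ is unchanged while $\lfloor n/2\rfloor$ drops by one), so it is enough that $\alpha^{2}$ and $\beta^{2}$ are roots of $y^{2}-ab(ab+2c)\,y+(abc)^{2}=0$. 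This is immediate from $\alpha+\beta=ab$ and $\alpha\beta=-abc$, which give $\alpha^{2}+\beta^{2}=ab(ab+2c)$ and $\alpha^{2}\beta^{2}=(abc)^{2}$.

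Next I would multiply through by the denominator and reindex, writing
\begin{equation*}
\bigl(1-(ab+2c)x^{2}+c^{2}x^{4}\bigr)G(x)=\sum_{n\ge 0}\bigl(\mathbb{K}_{w,n}-(ab+2c)\mathbb{K}_{w,n-2}+c^{2}\mathbb{K}_{w,n-4}\bigr)x^{n},
\end{equation*}
with the convention $\mathbb{K}_{w,m}=0$ for $m<0$. By the recurrence just established, every coefficient with $n\ge 4$ vanishes, leaving a cubic polynomial in $x$ whose coefficients are $\mathbb{K}_{w,0}$, $\mathbb{K}_{w,1}$, $\mathbb{K}_{w,2}-(ab+2c)\mathbb{K}_{w,0}$, and $\mathbb{K}_{w,3}-(ab+2c)\mathbb{K}_{w,1}$.

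The final and most delicate step is to collapse these boundary terms into the stated numerator, that is, to show the degree-two coefficient equals $a\mathbb{K}_{w,1}-(ab+c)\mathbb{K}_{w,0}$ and the degree-three coefficient equals $bc\mathbb{K}_{w,0}-c\mathbb{K}_{w,1}$; equivalently, that $\mathbb{K}_{w,2}=a\mathbb{K}_{w,1}+c\mathbb{K}_{w,0}$ and $\mathbb{K}_{w,3}=b\mathbb{K}_{w,2}+c\mathbb{K}_{w,1}$. I expect this to be the main obstacle, precisely because a single hybrid number packs four consecutive bi-periodic terms of alternating parity: the scalar recurrence $w_{n}=\chi(n)w_{n-1}+cw_{n-2}$ carries the index-dependent coefficient $\chi(n)\in\{a,b\}$, so the reduction cannot be applied uniformly to all four slots at once but must be verified slot by slot. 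To this end I would substitute $w_{2}=aw_{1}+cw_{0}$ and $w_{3}=(ab+c)w_{1}+bcw_{0}$ in the real and $i$ components, and handle the $\epsilon$- and $h$-slots through the four-term recurrence of the second paragraph. Assembling these slotwise identities and substituting into the cubic numerator then yields the asserted generating function.
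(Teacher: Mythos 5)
Your setup is sound and is in fact the same route the paper takes: the four-term recurrence $\mathbb{K}_{w,n}=(ab+2c)\mathbb{K}_{w,n-2}-c^{2}\mathbb{K}_{w,n-4}$ for $n\geq 4$ is correct (each slot satisfies the scalar version, exactly as you argue from the Binet formula), and multiplying $G(x)$ by $1-(ab+2c)x^{2}+c^{2}x^{4}$ does leave the cubic $\mathbb{K}_{w,0}+\mathbb{K}_{w,1}x+\left(\mathbb{K}_{w,2}-(ab+2c)\mathbb{K}_{w,0}\right)x^{2}+\left(\mathbb{K}_{w,3}-(ab+2c)\mathbb{K}_{w,1}\right)x^{3}$. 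The gap is in the step you yourself flag as the most delicate and then defer: collapsing the boundary terms requires $\mathbb{K}_{w,2}=a\mathbb{K}_{w,1}+c\mathbb{K}_{w,0}$, and this identity is false whenever $a\neq b$. Read slot by slot it asserts $w_{2+j}=aw_{1+j}+cw_{j}$ for $j=0,1,2,3$; this is the defining recurrence (\ref{1}) only when $2+j$ is even, whereas for $j=1,3$ (the $i$- and $h$-slots) the recurrence gives $w_{3}=bw_{2}+cw_{1}$ and $w_{5}=bw_{4}+cw_{3}$. Concretely, with $a=1$, $b=2$, $c=1$, $w_{0}=0$, $w_{1}=1$ one has $\mathbb{K}_{w,2}=1+3i+4\epsilon+11h$ but $a\mathbb{K}_{w,1}+c\mathbb{K}_{w,0}=1+2i+4\epsilon+7h$. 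The four-term recurrence cannot rescue the odd slots either, since it relates indices $m$ and $m-2$, not $m$ and $m-1$, and so cannot produce the first-order relation you need there. The analogous reduction of $\mathbb{K}_{w,3}$ fails for the same reason.

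Two further remarks. First, this is not a defect peculiar to your write-up: the paper's own proof performs exactly the substitutions $\mathbb{K}_{w,2}=a\mathbb{K}_{w,1}+c\mathbb{K}_{w,0}$ and $\mathbb{K}_{w,3}=(ab+c)\mathbb{K}_{w,1}+bc\mathbb{K}_{w,0}$ without justification, so your proposal faithfully reconstructs the published argument and stalls at precisely the same point; the difference is only that you postpone the verification, which in fact cannot be completed. Second, what your computation (and the paper's) actually establishes is
\begin{equation*}
G(x)=\frac{\mathbb{K}_{w,0}+\mathbb{K}_{w,1}x+\left(\mathbb{K}_{w,2}-(ab+2c)\mathbb{K}_{w,0}\right)x^{2}+\left(\mathbb{K}_{w,3}-(ab+2c)\mathbb{K}_{w,1}\right)x^{3}}{1-(ab+2c)x^{2}+c^{2}x^{4}},
\end{equation*}
which coincides with the stated closed form only in the symmetric case $a=b$ (the classical Horadam hybrid numbers). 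To obtain a correct theorem you must either restrict to that case or leave the numerator in the raw form above.
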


\begin{proof}
Let%
\begin{equation*}
G\left( x\right) =\sum_{n=0}^{\infty }\mathbb{K}_{w,n}x^{n}=\mathbb{K}_{w,0}+%
\mathbb{K}_{w,1}x+\mathbb{K}_{w,2}x^{2}+\cdots +\mathbb{K}_{w,n}x^{n}+\cdots
.
\end{equation*}%
Since the bi-periodic Horadam hybrid numbers satisfy the recurrence relation%
\begin{equation*}
\mathbb{K}_{w,n}=\left( ab+2c\right) \mathbb{K}_{w,n-2}-c^{2}\mathbb{K}%
_{w,n-4},\text{ }n\geq 4,
\end{equation*}
we get%
\begin{equation*}
\left( 1-\left( ab+2c\right) x^{2}+c^{2}x^{4}\right) G\left( x\right) \text{
\ \ \ \ \ \ \ \ \ \ \ \ \ \ \ \ \ \ \ \ \ \ \ \ \ \ \ \ \ \ \ \ \ \ \ \ \ \
\ \ \ \ \ \ \ \ \ \ \ \ \ \ \ \ \ \ \ \ \ \ \ \ \ \ \ \ \ \ \ \ \ \ \ \ \ \
\ \ \ \ \ \ \ \ \ \ \ \ \ \ \ \ \ \ \ \ \ \ \ \ \ \ \ \ \ }
\end{equation*}%
\begin{eqnarray*}
\text{\ \ } &=&\mathbb{K}_{w,0}+\mathbb{K}_{w,1}x+\left( \mathbb{K}%
_{w,2}-\left( ab+2c\right) \mathbb{K}_{w,0}\right) x^{2} \\
&&+\left( \mathbb{K}_{w,3}-\left( ab+2c\right) \mathbb{K}_{w,1}\right) x^{3}
\\
&&+\sum_{n=4}^{\infty }\left( \mathbb{K}_{w,n}-\left( ab+2c\right) \mathbb{K}%
_{w,n-2}+c^{2}\mathbb{K}_{w,n-4}\right) x^{n}\text{ \ \ \ \ \ \ \ \ \ \ \ \
\ \ \ \ \ \ \ \ \ \ \ \ \ \ \ \ \ \ \ \ \ \ \ \ \ \ \ \ \ \ \ \ \ \ \ \ \ \
\ \ \ \ \ \ \ \ \ \ \ \ \ \ \ \ \ \ \ \ \ \ \ \ }
\end{eqnarray*}%
\begin{eqnarray*}
&=&\mathbb{K}_{w,0}+\mathbb{K}_{w,1}x+\left( \left( a\mathbb{K}_{w,1}+c%
\mathbb{K}_{w,0}\right) -\left( ab+2c\right) \mathbb{K}_{w,0}\right) x^{2} \\
&&+\left( \left( \left( ab+c\right) \mathbb{K}_{w,1}+bc\mathbb{K}%
_{w,0}\right) -\left( ab+2c\right) \mathbb{K}_{w,1}\right) x^{3}.\text{ \ \
\ \ \ \ \ \ \ \ \ \ \ \ \ \ \ \ \ \ \ \ \ \ \ \ \ \ \ \ \ \ \ \ \ \ \ \ \ \
\ \ \ \ \ \ \ \ \ \ \ \ \ \ \ \ \ \ \ \ \ \ \ \ \ \ \ \ \ }
\end{eqnarray*}%
\ \ \ \ \ \ \ \ \ \ \ \ \ \ \ \ \ \ 
\end{proof}

Next, we state the Binet formula for the bi-periodic Horadam hybrid numbers
and so derive some well-known mathematical properties.

\begin{theorem}
The Binet formula for the bi-periodic Horadam hybrid numbers is%
\begin{equation*}
\mathbb{K}_{w,n}=\frac{a^{\xi \left( n+1\right) }}{\left( ab\right)
^{\left\lfloor \frac{n}{2}\right\rfloor }}\left( A\alpha _{\xi \left(
n\right) }\alpha ^{n}-B\beta _{\xi \left( n\right) }\beta ^{n}\right)
\end{equation*}%
where $\alpha _{\xi \left( n\right) }$ and $\beta _{\xi \left( n\right) }$
are defined as%
\begin{eqnarray*}
\alpha _{\xi \left( n\right) } &:&=1+\frac{1}{a}\left( \frac{a}{b}\right)
^{\xi \left( n\right) }\alpha i+\frac{1}{ab}\alpha ^{2}\epsilon +\frac{1}{%
a^{2}b}\left( \frac{a}{b}\right) ^{\xi \left( n\right) }\alpha ^{3}h, \\
\beta _{\xi \left( n\right) } &:&=1+\frac{1}{a}\left( \frac{a}{b}\right)
^{\xi \left( n\right) }\beta i+\frac{1}{ab}\beta ^{2}\epsilon +\frac{1}{%
a^{2}b}\left( \frac{a}{b}\right) ^{\xi \left( n\right) }\beta ^{3}h.
\end{eqnarray*}
\end{theorem}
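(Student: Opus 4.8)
The plan is to insert the Binet formula (\ref{2}) for each of the four components of $\mathbb{K}_{w,n}=w_{n}+w_{n+1}i+w_{n+2}\epsilon +w_{n+3}h$ and then regroup the result into an $\alpha$-part and a $\beta$-part. Writing $e_{0}=1$, $e_{1}=i$, $e_{2}=\epsilon $, $e_{3}=h$, I would start from
\[
w_{n+j}=\frac{a^{\xi \left( n+j+1\right) }}{\left( ab\right) ^{\left\lfloor \frac{n+j}{2}\right\rfloor }}\left( A\alpha ^{n+j}-B\beta ^{n+j}\right) ,\qquad j=0,1,2,3,
\]
and factor out the common prefactor $a^{\xi \left( n+1\right) }/\left( ab\right) ^{\left\lfloor n/2\right\rfloor }$ together with $A\alpha ^{n}$ from the $\alpha$-terms and $-B\beta ^{n}$ from the $\beta$-terms. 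Since $A$, $B$, and the scalars $\alpha ^{n}$, $\beta ^{n}$ are real and hence commute with the hybrid units, after this factorization the theorem reduces to the single identity
\[
\alpha _{\xi \left( n\right) }=\sum_{j=0}^{3}e_{j}\,a^{\xi \left( n+j+1\right) -\xi \left( n+1\right) }\left( ab\right) ^{\left\lfloor \frac{n}{2}\right\rfloor -\left\lfloor \frac{n+j}{2}\right\rfloor }\alpha ^{j},
\]
together with the same statement for $\beta _{\xi \left( n\right) }$ obtained by replacing $\alpha$ by $\beta$; verifying these two identities summand by summand is then enough.

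To verify the identity I would evaluate the two exponents in each summand. Using the elementary parity relations $\xi \left( n+1\right) =1-\xi \left( n\right) $ and $\xi \left( n+2\right) =\xi \left( n\right) $, the $a$-exponent $\xi \left( n+j+1\right) -\xi \left( n+1\right) $ is $0$ for $j\in \{0,2\}$ and equals $2\xi \left( n\right) -1$ for $j\in \{1,3\}$, while the floor increment satisfies the closed form $\left\lfloor \frac{n+j}{2}\right\rfloor -\left\lfloor \frac{n}{2}\right\rfloor =\left\lfloor \frac{j}{2}\right\rfloor +\xi \left( j\right) \xi \left( n\right) $. Substituting these, the $j=0$ summand gives the constant $1$ and the $j=2$ summand gives $\frac{1}{ab}\alpha ^{2}$, neither carrying any parity factor, whereas the $j=1$ and $j=3$ summands collapse to $\frac{1}{a}\left( \frac{a}{b}\right) ^{\xi \left( n\right) }\alpha $ and $\frac{1}{a^{2}b}\left( \frac{a}{b}\right) ^{\xi \left( n\right) }\alpha ^{3}$, respectively. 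These are exactly the four coefficients in the definition of $\alpha _{\xi \left( n\right) }$, which settles the $\alpha$-part, and the $\beta$-part follows verbatim.

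The main obstacle is the bookkeeping of the parity-dependent exponents: because the sequence is bi-periodic, $\xi \left( n+j+1\right) $ and $\left\lfloor \frac{n+j}{2}\right\rfloor $ behave differently according as $n$ is even or odd, and it is precisely this that forces the factor $\left( \frac{a}{b}\right) ^{\xi \left( n\right) }$ to appear in the $i$- and $h$-coefficients (odd $j$) but not in the $\epsilon$-coefficient (even $j$). I would handle this cleanly either through the closed-form floor identity quoted above or, if preferred, by splitting into the two cases $n$ even and $n$ odd and checking in each that the front scalars reduce to the stated coefficients; once this is done the remainder is routine algebra in $\alpha$ and $\beta$.
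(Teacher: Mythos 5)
Your proposal is correct and follows the same route as the paper, whose proof is only the one-line remark that the result follows from the definition of $\mathbb{K}_{w,n}$ and the Binet formula of $\{w_{n}\}$; you simply carry out the exponent bookkeeping that the paper leaves implicit, and your closed forms for $\xi\left( n+j+1\right) -\xi\left( n+1\right)$ and $\left\lfloor \frac{n+j}{2}\right\rfloor -\left\lfloor \frac{n}{2}\right\rfloor$ check out for all four values of $j$.
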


\begin{proof}
By using the definition of the sequence $\{\mathbb{K}_{w,n}\}$ and the Binet
formula of $\left\{ w_{n}\right\} ,$ we obtain the desired result.
\end{proof}

\begin{remark}
If we take $a=b=p$ and $c=-q$, we obtain the Binet formula of the classical
Horadam hybrid numbers in \cite{szynal1}.
\end{remark}

\begin{lemma}
\label{l1}%
\begin{equation}
\alpha _{\xi \left( n\right) }\beta _{\xi \left( n\right) }=\left\{ 
\begin{array}{cc}
\mathbb{K}_{v,0}-\theta +\frac{\Delta }{a}c\left( \mathbb{K}_{u,0}-\eta
\right) , & \text{if }n\text{ is even} \\ 
\mathbb{K}_{\widehat{v},0}-\widehat{\theta }+\frac{\Delta }{b}c\left( 
\mathbb{K}_{\widehat{u},0}-\widehat{\eta }\right) , & \text{if }n\text{ is
odd}%
\end{array}%
\right.  \label{5}
\end{equation}%
\begin{equation}
\beta _{\xi \left( n\right) }\alpha _{\xi \left( n\right) }=\left\{ 
\begin{array}{cc}
\mathbb{K}_{v,0}-\theta -\frac{\Delta }{a}c\left( \mathbb{K}_{u,0}-\eta
\right) , & \text{if }n\text{ is even} \\ 
\mathbb{K}_{\widehat{v},0}-\widehat{\theta }-\frac{\Delta }{b}c\left( 
\mathbb{K}_{\widehat{u},0}-\widehat{\eta }\right) , & \text{if }n\text{ is
odd}%
\end{array}%
\right.  \label{6}
\end{equation}%
where%
\begin{eqnarray*}
\eta &:&=\left( 1-b\right) i+\left( a-b-c\right) \epsilon +\left(
1+ab+c\right) h, \\
\widehat{\eta } &:&=\left( 1-a\right) i+\left( b-a-c\right) \epsilon +\left(
1+ab+c\right) h, \\
\theta &:&=1-\frac{bc}{a}+bc+\frac{bc^{3}}{a}, \\
\widehat{\theta } &:&=1-\frac{ac}{b}+ac+\frac{ac^{3}}{b}.
\end{eqnarray*}%
and the sequences $\left\{ \mathbb{K}_{\widehat{u},0}\right\} $ and $\left\{ 
\mathbb{K}_{\widehat{v},0}\right\} $ are the auxiliary sequences that are
obtained from $\left\{ \mathbb{K}_{u,0}\right\} $ and $\left\{ \mathbb{K}%
_{v,0}\right\} $ just only switching $a\leftrightarrow b.$ That is, $%
\widehat{u_{n}}=\left( \frac{b}{a}\right) ^{\xi \left( n+1\right) }u_{n}$
and $\widehat{v_{n}}=\left( \frac{a}{b}\right) ^{\xi \left( n\right) }v_{n}.$
\end{lemma}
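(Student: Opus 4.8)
The plan is to treat $\alpha_{\xi\left( n\right) }$ and $\beta_{\xi\left( n\right) }$ as two concrete hybrid numbers and apply the explicit multiplication rule stated in the Introduction. Writing $\alpha_{\xi\left( n\right) }=a_{1}+b_{1}i+c_{1}\epsilon+d_{1}h$ and $\beta_{\xi\left( n\right) }=a_{2}+b_{2}i+c_{2}\epsilon+d_{2}h$ with $a_{1}=a_{2}=1$, $b_{j}=\frac{1}{a}\left( \frac{a}{b}\right) ^{\xi\left( n\right) }\gamma_{j}$, $c_{j}=\frac{1}{ab}\gamma_{j}^{2}$, $d_{j}=\frac{1}{a^{2}b}\left( \frac{a}{b}\right) ^{\xi\left( n\right) }\gamma_{j}^{3}$ (where $\gamma_{1}=\alpha$ and $\gamma_{2}=\beta$), I would substitute these four components into the product formula for $k_{1}k_{2}$ and read off the scalar, $i$, $\epsilon$, and $h$ coefficients of $\alpha_{\xi\left( n\right) }\beta_{\xi\left( n\right) }$.

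First I would simplify every coefficient using only the symmetric relations $\alpha+\beta=ab$, $\alpha\beta=-abc$, $\alpha-\beta=\Delta$, together with the derived power sums $\alpha^{2}+\beta^{2}=a^{2}b^{2}+2abc$, $\alpha^{2}-\beta^{2}=ab\Delta$, and $\alpha^{3}+\beta^{3}=a^{3}b^{3}+3a^{2}b^{2}c$. The key structural point is that in the multiplication rule the only non-commutative contributions come from terms of the form $X_{1}Y_{2}-Y_{1}X_{2}$ (namely $b_{1}d_{2}-d_{1}b_{2}$ in the $i$ and $\epsilon$ parts, $d_{1}c_{2}-c_{1}d_{2}$ in the $\epsilon$ part, and $c_{1}b_{2}-b_{1}c_{2}$ in the $h$ part). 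Each of these is antisymmetric under $\alpha\leftrightarrow\beta$ and hence carries a factor of $\Delta$, while all remaining terms are symmetric. Thus each of the four coefficients splits cleanly into a commutation-invariant part built from the even power sums and an antisymmetric part built from the odd differences.

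Next I would identify the two aggregates. Using the explicit expansions $\mathbb{K}_{v,0}=2+bi+\left( ab+2c\right) \epsilon+\left( ab^{2}+3bc\right) h$ and $\mathbb{K}_{u,0}=i+a\epsilon+\left( ab+c\right) h$ obtained from $\mathbb{K}_{w,0}$ with initial data $\left( 2,b\right) $ and $\left( 0,1\right) $, a direct match shows that for even $n$ the symmetric part equals $\mathbb{K}_{v,0}-\theta$ and the antisymmetric part equals $\frac{\Delta}{a}c\left( \mathbb{K}_{u,0}-\eta\right) $. For instance the scalar coefficient reduces to $1+\frac{bc}{a}-bc-\frac{bc^{3}}{a}=2-\theta$, and since $\mathbb{K}_{u,0}-\eta=bi+\left( b+c\right) \epsilon-h$ has no scalar part, the surviving $i$, $\epsilon$, $h$ contributions of the antisymmetric piece reproduce $\frac{\Delta c}{a}\left( bi+\left( b+c\right) \epsilon-h\right) =\frac{\Delta c}{a}\left( \mathbb{K}_{u,0}-\eta\right) $. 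This establishes \eqref{5} for even $n$, and \eqref{6} follows immediately: forming $\beta_{\xi\left( n\right) }\alpha_{\xi\left( n\right) }$ interchanges $\alpha\leftrightarrow\beta$, which fixes the symmetric part and reverses the sign of the antisymmetric part.

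Finally, the odd case changes only the factor $\left( \frac{a}{b}\right) ^{\xi\left( n\right) }$ in $b_{j}$ and $d_{j}$ from $1$ to $\frac{a}{b}$, replacing the weights $\frac{1}{a}$ and $\frac{1}{a^{2}b}$ by $\frac{1}{b}$ and $\frac{1}{ab^{2}}$; this is exactly the substitution $a\leftrightarrow b$ in the relevant places, so the same computation produces the hatted quantities $\mathbb{K}_{\widehat{v},0}$, $\mathbb{K}_{\widehat{u},0}$, $\widehat{\theta}$, $\widehat{\eta}$ and the prefactor $\frac{\Delta}{b}$. The main obstacle is the bookkeeping in the third step: separating the symmetric and antisymmetric contributions in all four coefficients simultaneously and matching them against the explicit components of $\mathbb{K}_{v,0}$ and $\mathbb{K}_{u,0}$, so that the residual scalar and vector pieces collapse into precisely $\theta$ and $\eta$ rather than some other combination.
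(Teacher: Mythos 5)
Your proposal is correct and follows essentially the same route as the paper: a direct expansion of $\alpha _{\xi \left( n\right) }\beta _{\xi \left( n\right) }$ via the hybrid multiplication rule, simplified with $\alpha +\beta =ab$, $\alpha \beta =-abc$, $\alpha -\beta =\Delta $, and matched against the explicit components of $\mathbb{K}_{v,0}$ and $\mathbb{K}_{u,0}$. Your explicit symmetric/antisymmetric split under $\alpha \leftrightarrow \beta $ is a small organizational improvement over the paper, since it makes \eqref{6} and the odd case immediate rather than ``similar'' computations.
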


\begin{proof}
By using the definition of multiplication of two hybrid numbers, we have%
\begin{eqnarray*}
\alpha _{\xi \left( n\right) }\beta _{\xi \left( n\right) } &=&1+\frac{bc}{a}%
\left( \frac{a}{b}\right) ^{2\xi \left( n\right) }-\frac{bc^{3}}{a}\left( 
\frac{a}{b}\right) ^{2\xi \left( n\right) }-\left( \frac{a}{b}\right) ^{\xi
\left( n\right) }bc \\
&&+\left( b\left( \frac{a}{b}\right) ^{\xi \left( n\right) }+\left( \frac{a}{%
b}\right) ^{2\xi \left( n\right) }\frac{bc}{a}\Delta \right) i \\
&&+\left( \left( ab+2c\right) +\frac{bc}{a}\left( \frac{a}{b}\right) ^{2\xi
\left( n\right) }\Delta +\frac{c^{2}}{a}\left( \frac{a}{b}\right) ^{\xi
\left( n\right) }\Delta \right) \epsilon \\
&&+\left( \left( \frac{a}{b}\right) ^{\xi \left( n\right) }\left(
ab^{2}+3bc\right) -\frac{c}{a}\left( \frac{a}{b}\right) ^{\xi \left(
n\right) }\Delta \right) h\text{ \ \ \ \ \ \ \ \ \ \ \ \ \ \ \ \ \ \ \ \ \ \
\ \ \ \ \ \ \ \ }
\end{eqnarray*}%
\begin{eqnarray*}
&=&1+\left( \frac{a}{b}\right) ^{\xi \left( n\right) }bc\left( \frac{1}{a}%
\left( \frac{a}{b}\right) ^{\xi \left( n\right) }-\frac{c^{2}}{a}\left( 
\frac{a}{b}\right) ^{\xi \left( n\right) }-1\right) \\
&&+\left( \frac{a}{b}\right) ^{\xi \left( n\right) }\left( \left( \frac{b}{a}%
\right) ^{\xi \left( n\right) }2+bi+\left( \frac{b}{a}\right) ^{\xi \left(
n\right) }\left( ab+2c\right) \epsilon +\left( b\left( ab+3c\right) \right)
h\right) -2 \\
&&+\Delta \left( \frac{a}{b}\right) ^{\xi \left( n\right) }\frac{c}{a}\left(
\left( \frac{a}{b}\right) ^{\xi \left( n\right) }bi+\left( b\left( \frac{a}{b%
}\right) ^{\xi \left( n\right) }+c\right) \epsilon -h\right)
\end{eqnarray*}%
After some necessary simplifications, we get the result (\ref{5}).

Similarly, we can obtain $\beta _{\xi \left( n\right) }\alpha _{\xi \left(
n\right) }.$
\end{proof}

By using the Lemma \ref{l1}, we have%
\begin{eqnarray}
\alpha _{\xi \left( n\right) }\beta _{\xi \left( n\right) }+\beta _{\xi
\left( n\right) }\alpha _{\xi \left( n\right) } &=&\left\{ 
\begin{array}{cc}
2\left( \mathbb{K}_{v,0}-\theta \right) , & \text{ if }n\text{ is even} \\ 
2\left( \mathbb{K}_{\widehat{v},0}-\widehat{\theta }\right) , & \text{if }n%
\text{ is odd.}%
\end{array}%
\right.  \label{7} \\
\alpha _{\xi \left( n\right) }\beta _{\xi \left( n\right) }-\beta _{\xi
\left( n\right) }\alpha _{\xi \left( n\right) } &=&\left\{ 
\begin{array}{cc}
2\Delta \frac{c}{a}\left( \mathbb{K}_{u,0}-\eta \right) , & \text{ if }n%
\text{ is even} \\ 
2\Delta \frac{c}{b}\left( \mathbb{K}_{\widehat{u},0}-\widehat{\eta }\right) ,
& \text{if }n\text{ is odd.}%
\end{array}%
\right.  \label{8}
\end{eqnarray}

\begin{lemma}
\label{l2}%
\begin{equation}
\alpha _{\xi \left( n\right) }\alpha _{\xi \left( n\right) }=\left\{ 
\begin{array}{cc}
\mathbb{K}_{v,0}+\mu _{e}+\frac{\Delta }{a}\left( \mathbb{K}_{u,0}+\gamma
_{e}\right) , & \text{if }n\text{ is even} \\ 
\mathbb{K}_{\widehat{v},0}+\mu _{o}+\frac{\Delta }{b}\left( \mathbb{K}_{%
\widehat{u},0}+\gamma _{o}\right) , & \text{if }n\text{ is odd}%
\end{array}%
\right.  \label{9}
\end{equation}%
\begin{equation}
\beta _{\xi \left( n\right) }\beta _{\xi \left( n\right) }=\left\{ 
\begin{array}{cc}
\mathbb{K}_{v,0}+\mu _{e}-\frac{\Delta }{a}\left( \mathbb{K}_{u,0}+\gamma
_{e}\right) , & \text{if }n\text{ is even} \\ 
\mathbb{K}_{\widehat{v},0}+\mu _{o}-\frac{\Delta }{b}\left( \mathbb{K}_{%
\widehat{u},0}+\gamma _{o}\right) , & \text{if }n\text{ is odd}%
\end{array}%
\right.  \label{10}
\end{equation}%
where%
\begin{eqnarray*}
\mu _{e} &:&=-1+\frac{b}{a}c\left( u_{5}+2u_{2}-u_{1}\right) +b\gamma _{e} \\
\mu _{o} &:&=-1+\frac{a}{b}c\left( u_{5}+2\frac{b}{a}u_{2}-u_{1}\right)
+a\gamma _{o}
\end{eqnarray*}%
and%
\begin{eqnarray*}
\gamma _{e} &:&=\frac{1}{2}\left( \frac{b}{a}u_{6}+2u_{3}-\frac{b}{a}%
u_{2}\right) \\
\gamma _{o} &:&=\frac{1}{2}\left( u_{6}+2u_{3}-u_{2}\right) .
\end{eqnarray*}
\end{lemma}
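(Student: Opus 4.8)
The plan is to mirror the proof of Lemma \ref{l1}: compute the hybrid products $\alpha_{\xi(n)}\alpha_{\xi(n)}$ and $\beta_{\xi(n)}\beta_{\xi(n)}$ straight from the multiplication rule, and then reorganize the resulting coefficients into the $\mathbb{K}_{u,0}$ and $\mathbb{K}_{v,0}$ language. The simplification that makes this tractable is that, when one multiplies a hybrid number $k=a_{0}+b_{0}i+c_{0}\epsilon +d_{0}h$ by itself (I write $a_{0},b_{0},c_{0},d_{0}$ for the components so as not to clash with the scalars $a,b,c$), all of the genuinely non-commutative contributions in the multiplication formula cancel: the $b_{1}d_{2}-d_{1}b_{2}$, $d_{1}c_{2}-c_{1}d_{2}$ and $c_{1}b_{2}-b_{1}c_{2}$ blocks vanish when $k_{1}=k_{2}$. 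Hence
\begin{equation*}
k^{2}=\left( a_{0}^{2}-b_{0}^{2}+d_{0}^{2}+2b_{0}c_{0}\right) +2a_{0}b_{0}\,i+2a_{0}c_{0}\,\epsilon +2a_{0}d_{0}\,h,
\end{equation*}
so with $a_{0}=1$ the three operator components of $\alpha _{\xi \left( n\right) }^{2}$ are simply twice the corresponding components of $\alpha _{\xi \left( n\right) }$, and only the real part carries the quadratic terms $-b_{0}^{2}+d_{0}^{2}+2b_{0}c_{0}$.

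Next I would substitute the explicit components of $\alpha _{\xi \left( n\right) }$, keeping the factor $\left( a/b\right) ^{\xi \left( n\right) }$ symbolic so that the even case ($\xi \left( n\right) =0$) and the odd case ($\xi \left( n\right) =1$) can be read off together. The powers $\alpha ^{2},\alpha ^{3},\alpha ^{6}$ that then appear (the last coming from $d_{0}^{2}=a^{-4}b^{-2}\left( a/b\right) ^{2\xi \left( n\right) }\alpha ^{6}$) I would linearize by means of (\ref{*}), writing each as $\alpha ^{m}=X_{m}\alpha +Y_{m}$ with $X_{m},Y_{m}$ expressed through $u_{m},u_{m-1}$ and the scalars $a,b,c$; the analogous substitution via (\ref{**}) handles $\beta _{\xi \left( n\right) }^{2}$. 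Finally, inserting $\alpha =\tfrac{1}{2}\left( ab+\Delta \right) $ and $\beta =\tfrac{1}{2}\left( ab-\Delta \right) $ splits every expression into a $\Delta $-free part and a part proportional to $\Delta $.

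Assembling these pieces gives the claim. The $i,\epsilon ,h$ components of $\alpha _{\xi \left( n\right) }^{2}$ collapse to $v_{1},v_{2},v_{3}$ plus $\tfrac{\Delta }{a}$ times $u_{1},u_{2},u_{3}$ (using (\ref{u}) and (\ref{v})), which is exactly the operator part of $\mathbb{K}_{v,0}+\tfrac{\Delta }{a}\mathbb{K}_{u,0}$; the real part supplies the scalar corrections, whose $\Delta $-free portion is $\mu _{e}$ and whose coefficient of $\tfrac{\Delta }{a}$ is $\gamma _{e}$. For $\beta _{\xi \left( n\right) }^{2}$ only the sign of the $\Delta $-proportional part changes, producing (\ref{10}). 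The odd case is identical after the substitution $a\leftrightarrow b$, which turns $u,v$ into the auxiliary sequences $\widehat{u},\widehat{v}$, replaces $\tfrac{\Delta }{a}$ by $\tfrac{\Delta }{b}$, and yields $\mu _{o},\gamma _{o}$.

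The main obstacle is the real part, and in particular the sextic term: tracking the normalization factor $a^{\xi \left( n+1\right) }\left( ab\right) ^{-\left\lfloor n/2\right\rfloor }$ together with the parity weights $\left( a/b\right) ^{\xi \left( n\right) }$ through $\alpha ^{6}$ requires care, and the final packaging of the $\Delta $-free scalar into the stated mixed form of $\mu _{e}$—which is written in terms of $u_{5}$ and of $\gamma _{e}$ itself—rests on linearizing $\alpha ^{6}$ via (\ref{*}) so that $u_{5},u_{6}$ appear directly rather than through the symmetric combination $\alpha ^{6}+\beta ^{6}$. Once that rewriting is in place, both cases follow by routine, if lengthy, simplification exactly as in Lemma \ref{l1}.
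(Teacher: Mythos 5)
Your proposal is correct and follows essentially the same route as the paper: your expansion $k^{2}=(a_{0}^{2}-b_{0}^{2}+d_{0}^{2}+2b_{0}c_{0})+2a_{0}b_{0}i+2a_{0}c_{0}\epsilon+2a_{0}d_{0}h$ is, for $a_{0}=1$, exactly the identity $\alpha_{\xi(n)}^{2}=2\alpha_{\xi(n)}-C(\alpha_{\xi(n)})$ that the paper invokes, and both arguments then linearize the powers of $\alpha$ and $\beta$ via (\ref{*}) and (\ref{**}) and repackage the result using (\ref{u}) and (\ref{v}). Your version merely spells out the component bookkeeping (in particular the $\alpha^{6}$ term in the real part) that the paper leaves implicit.
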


\begin{proof}
By considering the relations%
\begin{equation*}
\alpha _{\xi \left( n\right) }\alpha _{\xi \left( n\right) }=2\alpha _{\xi
\left( n\right) }-C\left( \alpha _{\xi \left( n\right) }\right)
\end{equation*}%
and%
\begin{equation*}
\beta _{\xi \left( n\right) }\beta _{\xi \left( n\right) }=2\beta _{\xi
\left( n\right) }-C\left( \beta _{\xi \left( n\right) }\right) ,
\end{equation*}%
where $C\left( \alpha _{\xi \left( n\right) }\right) $ is the character of
the hybrid number $\alpha _{\xi \left( n\right) }$ and using the relations (%
\ref{*}) and (\ref{**}), we get the desired result.
\end{proof}

\begin{remark}
If we take $a=b=p$ and $c=q$, we obtain the analogous relations for $(p,q)$%
-Fibonacci hybrid numbers in \cite[Lemma 2.9]{morales}.
\end{remark}

\begin{theorem}
\textit{(Vajda's like identity)} For nonnegative integers $n,r,$ and $s,$ we
have%
\begin{eqnarray*}
&&\mathbb{K}_{w,n+2r}\mathbb{K}_{w,n+2s}-\mathbb{K}_{w,n}\mathbb{K}%
_{w,n+2\left( r+s\right) } \\
&=&\left\{ 
\begin{array}{cc}
\left( -c\right) ^{n}AB\Delta ^{2}u_{2r}\left( \left( \mathbb{K}%
_{v,0}-\theta \right) u_{2s}-c\left( \mathbb{K}_{u,0}-\eta \right)
v_{2s}\right) , & \text{if }n\text{ is even} \\ 
\left( -c\right) ^{n}AB\Delta ^{2}u_{2r}\left( \left( \mathbb{K}_{\widehat{v}%
,0}-\widehat{\theta }\right) \frac{b}{a}u_{2s}-c\left( \mathbb{K}_{\widehat{u%
},0}-\widehat{\eta }\right) v_{2s}\right) , & \text{if }n\text{ is odd.}%
\end{array}%
\right.
\end{eqnarray*}
\end{theorem}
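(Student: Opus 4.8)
The plan is to substitute the Binet formula for the bi-periodic Horadam hybrid numbers into both products and exploit the fact that the four indices $n$, $n+2r$, $n+2s$, and $n+2(r+s)$ all share the parity of $n$. Consequently $\xi(n+2k)=\xi(n)$ for every relevant $k$, so the hybrid coefficients $\alpha_{\xi(\cdot)}$ and $\beta_{\xi(\cdot)}$ are one and the same throughout; I write them simply as $\alpha_{\xi(n)}$ and $\beta_{\xi(n)}$. Pulling the $k$-independent scalar prefactor out in front, each $\mathbb{K}_{w,n+2k}$ reduces to $(ab)^{-k}$ times $A\alpha_{\xi(n)}\alpha^{n+2k}-B\beta_{\xi(n)}\beta^{n+2k}$, where $A,B,\alpha,\beta$ are real scalars that commute with the hybrid units.

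First I would expand the two products $\mathbb{K}_{w,n+2r}\mathbb{K}_{w,n+2s}$ and $\mathbb{K}_{w,n}\mathbb{K}_{w,n+2(r+s)}$. Each yields four terms with hybrid coefficients $\alpha_{\xi(n)}\alpha_{\xi(n)}$, $\alpha_{\xi(n)}\beta_{\xi(n)}$, $\beta_{\xi(n)}\alpha_{\xi(n)}$, and $\beta_{\xi(n)}\beta_{\xi(n)}$. The diagonal terms, carrying $A^{2}\alpha_{\xi(n)}\alpha_{\xi(n)}$ and $B^{2}\beta_{\xi(n)}\beta_{\xi(n)}$, acquire the scalar weights $\alpha^{2n+2r+2s}$ and $\beta^{2n+2r+2s}$ respectively in \emph{both} products, so they cancel exactly upon subtraction. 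After collecting $\alpha^{n}\beta^{n}$ and using $\alpha^{2r}\beta^{2s}-\beta^{2r+2s}=\beta^{2s}(\alpha^{2r}-\beta^{2r})$ together with $\beta^{2r}\alpha^{2s}-\alpha^{2r+2s}=-\alpha^{2s}(\alpha^{2r}-\beta^{2r})$, the surviving cross terms factor as
\begin{equation*}
AB(\alpha\beta)^{n}(\alpha^{2r}-\beta^{2r})\left( -\alpha_{\xi(n)}\beta_{\xi(n)}\beta^{2s}+\beta_{\xi(n)}\alpha_{\xi(n)}\alpha^{2s}\right) .
\end{equation*}

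The final step is to replace the hybrid products by Lemma \ref{l1}. Writing $\alpha_{\xi(n)}\beta_{\xi(n)}=X+\Delta Y$ and $\beta_{\xi(n)}\alpha_{\xi(n)}=X-\Delta Y$, with $X=\mathbb{K}_{v,0}-\theta$ and $Y=\frac{c}{a}(\mathbb{K}_{u,0}-\eta)$ in the even case and the hatted analogues in the odd case, the bracket collapses to $X(\alpha^{2s}-\beta^{2s})-\Delta Y(\alpha^{2s}+\beta^{2s})$. I would then invoke the Binet formulas (\ref{u}) and (\ref{v}) at the even indices, namely $\alpha^{2s}-\beta^{2s}=\frac{(ab)^{s}}{a}\Delta u_{2s}$, $\alpha^{2s}+\beta^{2s}=(ab)^{s}v_{2s}$, and likewise $\alpha^{2r}-\beta^{2r}=\frac{(ab)^{r}}{a}\Delta u_{2r}$, together with $(\alpha\beta)^{n}=(-c)^{n}(ab)^{n}$. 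Substituting these and carefully tracking the powers of $ab$, $a$, and $b$, all prefactors cancel down to $(-c)^{n}AB\Delta^{2}u_{2r}$ times the asserted bracket, giving the even case; the odd case runs identically, with $\frac{1}{b}$ in place of $\frac{1}{a}$ producing the factor $\frac{b}{a}$ in front of $u_{2s}$.

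The main obstacle is the bookkeeping under non-commutativity: because $\alpha_{\xi(n)}\beta_{\xi(n)}\neq \beta_{\xi(n)}\alpha_{\xi(n)}$, the two cross terms must be kept in their correct order, and it is precisely their asymmetry, captured by the $\pm\Delta Y$ splitting in Lemma \ref{l1}, that converts the difference of products into the stated combination of $u_{2s}$ and $v_{2s}$. The diagonal cancellation, by contrast, is automatic only because the shared parity forces identical hybrid coefficients across all four indices.
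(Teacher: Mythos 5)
Your proposal is correct and follows the paper's own proof essentially step for step: substitute the Binet formula, use the shared parity of $n$, $n+2r$, $n+2s$, $n+2(r+s)$ to cancel the diagonal terms, factor the surviving cross terms as $AB(\alpha\beta)^{n}(\alpha^{2r}-\beta^{2r})\left(\beta_{\xi(n)}\alpha_{\xi(n)}\alpha^{2s}-\alpha_{\xi(n)}\beta_{\xi(n)}\beta^{2s}\right)$, and then apply Lemma \ref{l1} together with the Binet formulas for $u_{2r}$, $u_{2s}$, $v_{2s}$. No gaps; the ordering of the non-commuting factors and the $\pm\Delta Y$ splitting are handled exactly as in the paper.
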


\begin{proof}
From the Binet formula of the bi-periodic Horadam hybrid numbers, we get%
\begin{eqnarray*}
&&\mathbb{K}_{w,n+2r}\mathbb{K}_{w,n+2s}-\mathbb{K}_{w,n}\mathbb{K}%
_{w,n+2\left( r+s\right) } \\
&=&\frac{a^{\xi \left( n+2r+1\right) }}{\left( ab\right) ^{\left\lfloor 
\frac{n+2r}{2}\right\rfloor }}\left( A\alpha _{\xi \left( n\right) }\alpha
^{n+2r}-B\beta _{\xi \left( n\right) }\beta ^{n+2r}\right) \frac{a^{\xi
\left( n+2s+1\right) }}{\left( ab\right) ^{\left\lfloor \frac{n+2s}{2}%
\right\rfloor }}\left( A\alpha _{\xi \left( n\right) }\alpha ^{n+2s}-B\beta
_{\xi \left( n\right) }\beta ^{n+2s}\right) \\
&&-\frac{a^{\xi \left( n+1\right) }}{\left( ab\right) ^{\left\lfloor \frac{n%
}{2}\right\rfloor }}\left( A\alpha _{\xi \left( n\right) }\alpha ^{n}-B\beta
_{\xi \left( n\right) }\beta ^{n}\right) \frac{a^{\xi \left( n+2\left(
r+s\right) +1\right) }}{\left( ab\right) ^{\left\lfloor \frac{n+2\left(
r+s\right) }{2}\right\rfloor }}\left( A\alpha _{\xi \left( n\right) }\alpha
^{n+2\left( r+s\right) }-B\beta _{\xi \left( n\right) }\beta ^{n+2\left(
r+s\right) }\right)
\end{eqnarray*}%
\begin{eqnarray*}
&=&\frac{a^{2\xi \left( n+1\right) }}{\left( ab\right) ^{2\left\lfloor \frac{%
n}{2}\right\rfloor +r+s}}\left( -AB\alpha _{\xi \left( n\right) }\beta _{\xi
\left( n\right) }\alpha ^{n+2r}\beta ^{n+2s}-AB\beta _{\xi \left( n\right)
}\alpha _{\xi \left( n\right) }\alpha ^{n+2s}\beta ^{n+2r}\right. \\
&&\left. +AB\alpha _{\xi \left( n\right) }\beta _{\xi \left( n\right)
}\alpha ^{n}\beta ^{n+2\left( r+s\right) }+AB\beta _{\xi \left( n\right)
}\alpha _{\xi \left( n\right) }\alpha ^{n+2\left( r+s\right) }\beta
^{n}\right) \\
&=&\frac{a^{2\xi \left( n+1\right) }}{\left( ab\right) ^{2\left\lfloor \frac{%
n}{2}\right\rfloor +r+s}}AB\left( \alpha \beta \right) ^{n}\left( \alpha
_{\xi \left( n\right) }\beta _{\xi \left( n\right) }\beta ^{2s}\left( \beta
^{2r}-\alpha ^{2r}\right) +\beta _{\xi \left( n\right) }\alpha _{\xi \left(
n\right) }\alpha ^{2s}\left( \alpha ^{2r}-\beta ^{2r}\right) \right) \\
&=&\frac{a^{2\xi \left( n+1\right) }}{\left( ab\right) ^{2\left\lfloor \frac{%
n}{2}\right\rfloor +r+s}}AB\left( \alpha \beta \right) ^{n}\left( \alpha
^{2r}-\beta ^{2r}\right) \left( \beta _{\xi \left( n\right) }\alpha _{\xi
\left( n\right) }\alpha ^{2s}-\alpha _{\xi \left( n\right) }\beta _{\xi
\left( n\right) }\beta ^{2s}\right) .
\end{eqnarray*}

If $n$ is even, by considering the relations (\ref{5}) and (\ref{6}), we
obtain%
\begin{eqnarray*}
&&\mathbb{K}_{w,n+2r}\mathbb{K}_{w,n+2s}-\mathbb{K}_{w,n}\mathbb{K}%
_{w,n+2\left( r+s\right) } \\
&=&\frac{a^{2\xi \left( n+1\right) }}{\left( ab\right) ^{2\left\lfloor \frac{%
n}{2}\right\rfloor +r+s}}AB\left( \alpha \beta \right) ^{n}\left( \alpha
^{2r}-\beta ^{2r}\right) \left( \beta _{\xi \left( n\right) }\alpha _{\xi
\left( n\right) }\alpha ^{2s}-\alpha _{\xi \left( n\right) }\beta _{\xi
\left( n\right) }\beta ^{2s}\right) \\
&=&\frac{a^{2}\left( -c\right) ^{n}}{\left( ab\right) ^{r+s}}AB\left( \alpha
^{2r}-\beta ^{2r}\right) \left( \left( \mathbb{K}_{v,0}-\theta \right)
\left( \alpha ^{2s}-\beta ^{2s}\right) -\frac{\Delta }{a}c\left( \mathbb{K}%
_{u,0}-\eta \right) \left( \alpha ^{2s}+\beta ^{2s}\right) \right) \\
&=&\frac{a^{2}\left( -c\right) ^{n}}{\left( ab\right) ^{r+s}}AB\frac{\left(
ab\right) ^{r}}{a}\Delta u_{2r}\left( \left( \mathbb{K}_{v,0}-\theta \right)
\left( \frac{\left( ab\right) ^{s}}{a}u_{2s}\Delta \right) -\frac{\Delta }{a}%
c\left( \mathbb{K}_{u,0}-\eta \right) \left( \left( ab\right)
^{s}v_{2s}\right) \right) \\
&=&\left( -c\right) ^{n}ABu_{2r}\Delta ^{2}\left( \left( \mathbb{K}%
_{v,0}-\theta \right) u_{2s}-c\left( \mathbb{K}_{u,0}-\eta \right)
v_{2s}\right) .
\end{eqnarray*}%
Similarly, we obtain the desired result for odd $n$.
\end{proof}

\begin{corollary}
If we take $s=-r,$ we get the\textit{\ Catalan's like identity:}%
\begin{eqnarray*}
&&\mathbb{K}_{w,n+2r}\mathbb{K}_{w,n-2r}-\mathbb{K}_{w,n}^{2} \\
&=&\left\{ 
\begin{array}{cc}
\left( -1\right) ^{n+1}c^{n-2r}AB\Delta ^{2}u_{2r}\left( \left( \mathbb{K}%
_{v,0}-\theta \right) u_{2r}+c\left( \mathbb{K}_{u,0}-\eta \right)
v_{2r}\right) , & \text{if }n\text{ is even} \\ 
\left( -1\right) ^{n+1}c^{n-2r}AB\Delta ^{2}u_{2r}\left( \left( \mathbb{K}_{%
\widehat{v},0}-\widehat{\theta }\right) \frac{b}{a}u_{2r}+c\left( \mathbb{K}%
_{\widehat{u},0}-\widehat{\eta }\right) v_{2r}\right) , & \text{if }n\text{
is odd.}%
\end{array}%
\right.
\end{eqnarray*}
\end{corollary}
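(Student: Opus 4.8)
The plan is to obtain the Catalan-like identity as a pure specialization of the Vajda-like identity just established, so the only real content is recording how the generalized bi-periodic Fibonacci and Lucas numbers behave at negative even indices. I would first note that the proof of the Vajda-like identity rests solely on the Binet formula and on the products computed in Lemma \ref{l1}, none of which uses the sign of $r$ or $s$; hence that identity remains valid for all integers, and in particular I may substitute $s=-r$. With this substitution, $n+2s=n-2r$ and $n+2\left(r+s\right)=n$, so the left-hand side collapses to $\mathbb{K}_{w,n+2r}\mathbb{K}_{w,n-2r}-\mathbb{K}_{w,n}\mathbb{K}_{w,n}=\mathbb{K}_{w,n+2r}\mathbb{K}_{w,n-2r}-\mathbb{K}_{w,n}^{2}$, exactly the left-hand side claimed. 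On the right-hand side the terms $u_{2s}$ and $v_{2s}$ become $u_{-2r}$ and $v_{-2r}$, so everything reduces to rewriting these in terms of $u_{2r}$ and $v_{2r}$.

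Next I would establish the two negative-index relations
\begin{equation*}
u_{-2r}=-c^{-2r}u_{2r}\qquad\text{and}\qquad v_{-2r}=c^{-2r}v_{2r}.
\end{equation*}
These follow directly from the Binet formulas (\ref{u}) and (\ref{v}). Since $2r$ is even one has $\xi\left(2r\right)=0$, $\xi\left(2r+1\right)=1$ and $\left\lfloor \tfrac{2r}{2}\right\rfloor =r$, while for the negative argument $\xi\left(-2r\right)=0$, $\xi\left(-2r+1\right)=1$ and $\left\lfloor \tfrac{-2r}{2}\right\rfloor =-r$. Using $\alpha\beta=-abc$, so that $\left(\alpha\beta\right)^{2r}=\left(abc\right)^{2r}$, I get $\alpha^{-2r}-\beta^{-2r}=-\left(\alpha^{2r}-\beta^{2r}\right)/\left(abc\right)^{2r}$ and $\alpha^{-2r}+\beta^{-2r}=\left(\alpha^{2r}+\beta^{2r}\right)/\left(abc\right)^{2r}$. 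After cancelling the powers of $ab$ coming from the two prefactors, the surviving ratio is $\left(ab\right)^{2r}/\left(abc\right)^{2r}=c^{-2r}$, which yields the two displayed relations.

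Finally I would substitute these into both branches of the Vajda-like identity and tidy the constants. In the even case the bracket $\left(\mathbb{K}_{v,0}-\theta\right)u_{-2r}-c\left(\mathbb{K}_{u,0}-\eta\right)v_{-2r}$ becomes $-c^{-2r}\left(\mathbb{K}_{v,0}-\theta\right)u_{2r}-c^{1-2r}\left(\mathbb{K}_{u,0}-\eta\right)v_{2r}$; factoring out $-c^{-2r}$ flips the sign of the term containing $v_{2r}$ from $-$ to $+$, and combining the leading constants via $\left(-c\right)^{n}\cdot\left(-c^{-2r}\right)=\left(-1\right)^{n+1}c^{n-2r}$ gives exactly the stated even-$n$ formula. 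The odd case is handled identically, with $\mathbb{K}_{v,0},\theta,\mathbb{K}_{u,0},\eta$ replaced by $\mathbb{K}_{\widehat{v},0},\widehat{\theta},\mathbb{K}_{\widehat{u},0},\widehat{\eta}$ and the extra factor $\tfrac{b}{a}$ carried along unchanged, since the relations $u_{-2r}=-c^{-2r}u_{2r}$ and $v_{-2r}=c^{-2r}v_{2r}$ depend only on the (even) index and not on the parity of $n$. The one place requiring care—and the only spot where an error could plausibly enter—is this sign-and-exponent bookkeeping in the negative-index relations together with the consolidation of $\left(-c\right)^{n}c^{-2r}$ into $\left(-1\right)^{n+1}c^{n-2r}$; everything else is an immediate substitution.
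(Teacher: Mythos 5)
Your proposal is correct and matches what the paper intends: the corollary is presented as a direct specialization $s=-r$ of the Vajda-like identity, and your negative-index relations $u_{-2r}=-c^{-2r}u_{2r}$, $v_{-2r}=c^{-2r}v_{2r}$ together with the consolidation $\left( -c\right) ^{n}\cdot \left( -c^{-2r}\right) =\left( -1\right) ^{n+1}c^{n-2r}$ reproduce both branches exactly. The paper itself omits these details, so your write-up simply supplies the bookkeeping the authors left implicit.
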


\begin{corollary}
If we take $s=-r$ and $r=1,$ we get the\textit{\ Cassini's like identity:}%
\begin{eqnarray*}
&&\mathbb{K}_{w,n+2}\mathbb{K}_{w,n-2}-\mathbb{K}_{w,n}^{2} \\
&=&\left\{ 
\begin{array}{cc}
\left( -1\right) ^{n+1}ac^{n-2}AB\Delta ^{2}\left( \left( \mathbb{K}%
_{v,0}-\theta \right) a+c\left( ab+2c\right) \left( \mathbb{K}_{u,0}-\eta
\right) \right) , & \text{if }n\text{ is even} \\ 
\left( -1\right) ^{n+1}ac^{n-2}AB\Delta ^{2}\left( \left( \mathbb{K}_{%
\widehat{v},0}-\widehat{\theta }\right) b+c\left( ab+2c\right) \left( 
\mathbb{K}_{\widehat{u},0}-\widehat{\eta }\right) \right) , & \text{if }n%
\text{ is odd.}%
\end{array}%
\right.
\end{eqnarray*}
\end{corollary}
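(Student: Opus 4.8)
The plan is to obtain this as a direct specialization of the Catalan's like identity (Corollary 1), setting $r=1$. Under this substitution the generic Catalan expression is built entirely from the two quantities $u_{2r}=u_{2}$ and $v_{2r}=v_{2}$, so the only genuine computation is to evaluate these two bi-periodic numbers explicitly and then match the resulting expression against the claimed one, branch by branch.

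First I would compute $u_{2}$ and $v_{2}$ from the Binet formulas. For $u_{2}$, using (\ref{u}) with $n=2$ (so $\xi(3)=1$ and $\lfloor 2/2\rfloor=1$), I get $u_{2}=\dfrac{a}{ab}\cdot\dfrac{\alpha^{2}-\beta^{2}}{\alpha-\beta}=\dfrac{1}{b}(\alpha+\beta)=a$, since $\alpha+\beta=ab$. For $v_{2}$, using (\ref{v}) with $n=2$ (so $\xi(2)=0$ and $\lfloor 2/2\rfloor=1$), I get $v_{2}=\dfrac{1}{ab}(\alpha^{2}+\beta^{2})=\dfrac{(ab)^{2}+2abc}{ab}=ab+2c$, where I used $\alpha+\beta=ab$ and $\alpha\beta=-abc$ to write $\alpha^{2}+\beta^{2}=(\alpha+\beta)^{2}-2\alpha\beta$. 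With these two values in hand, substituting $r=1$ into the even branch of Corollary 1 turns the leading factor $u_{2r}$ into $a$ and the bracket into $(\mathbb{K}_{v,0}-\theta)\,a+c\,(ab+2c)(\mathbb{K}_{u,0}-\eta)$, which is exactly the even case of the asserted Cassini's like identity. For the odd branch, the factor $\tfrac{b}{a}\,u_{2r}$ collapses to $\tfrac{b}{a}\cdot a=b$, so the bracket becomes $(\mathbb{K}_{\widehat{v},0}-\widehat{\theta})\,b+c\,(ab+2c)(\mathbb{K}_{\widehat{u},0}-\widehat{\eta})$, matching the odd case.

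Since the whole argument is a specialization of an identity already established, there is no real obstacle to overcome; the only point requiring minor care is the parity bookkeeping in the exponents $\xi(\cdot)$ and $\lfloor\cdot/2\rfloor$ when evaluating $u_{2}$ and $v_{2}$, and even that is routine. I expect the entire proof to reduce to the two one-line Binet evaluations above followed by the substitution $r=1$.
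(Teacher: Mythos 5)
Your proposal is correct and matches the paper's (implicit) argument exactly: the corollary is stated as the specialization $s=-r$, $r=1$ of the Vajda/Catalan identity, and your evaluations $u_{2}=a$ and $v_{2}=ab+2c$ (which also follow at once from the recurrence with $u_{0}=0,u_{1}=1$ and $v_{0}=2,v_{1}=b$) are the only details needed to recover both branches.
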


Note that for even case, the Cassini's like identity can be stated as by
means of the following matrix identity:%
\begin{equation}
\left[ 
\begin{array}{cc}
\mathbb{K}_{w,2n+2} & \mathbb{K}_{w,2n} \\ 
\mathbb{K}_{w,2n} & \mathbb{K}_{w,2n-2}%
\end{array}%
\right] =\left[ 
\begin{array}{cc}
\mathbb{K}_{w,4} & \mathbb{K}_{w,2} \\ 
\mathbb{K}_{w,2} & \mathbb{K}_{w,0}%
\end{array}%
\right] \left[ 
\begin{array}{cc}
ab+2c & -c^{2} \\ 
1 & 0%
\end{array}%
\right] ^{n-1}.  \label{11}
\end{equation}%
By taking determinant from above to down below of both sides of the matrix
equality (\ref{11}), we get%
\begin{equation}
\mathbb{K}_{w,2n+2}\mathbb{K}_{w,2n-2}-\mathbb{K}_{w,2n}^{2}=c^{2n-2}\left( 
\mathbb{K}_{w,4}\mathbb{K}_{w,0}-\mathbb{K}_{w,2}^{2}\right) .  \label{12}
\end{equation}%
By taking determinant from down below to above of both sides of the matrix
equality (\ref{11}), we get%
\begin{equation}
\mathbb{K}_{w,2n-2}\mathbb{K}_{w,2n+2}-\mathbb{K}_{w,2n}^{2}=c^{2n-2}\left( 
\mathbb{K}_{w,0}\mathbb{K}_{w,4}-\mathbb{K}_{w,2}^{2}\right) .  \label{121}
\end{equation}

\begin{theorem}
For $n\geq 1,$ we have%
\begin{equation*}
\sum_{r=1}^{n}\mathbb{K}_{w,r}=\frac{c^{2}\left( \mathbb{K}_{w,n}+\mathbb{K}%
_{w,n-1}-\mathbb{K}_{w,0}-\mathbb{K}_{w,-1}\right) -\mathbb{K}_{w,n+2}-%
\mathbb{K}_{w,n+1}+\mathbb{K}_{w,2}+\mathbb{K}_{w,1}}{c^{2}-ab-2c+1}.
\end{equation*}
\end{theorem}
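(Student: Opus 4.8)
The plan is to collapse the sum into boundary terms by means of the fourth-order recurrence
\begin{equation*}
\mathbb{K}_{w,r}=\left( ab+2c\right) \mathbb{K}_{w,r-2}-c^{2}\mathbb{K}_{w,r-4},\qquad r\geq 4,
\end{equation*}
which was already isolated in the proof of the generating function theorem. Setting $S_{n}:=\sum_{r=1}^{n}\mathbb{K}_{w,r}$, I would sum the identity $\mathbb{K}_{w,r}-\left( ab+2c\right) \mathbb{K}_{w,r-2}+c^{2}\mathbb{K}_{w,r-4}=0$ over $r=4,\ldots ,n$. After reindexing the three resulting sums and peeling off the finitely many terms whose subscripts fall outside $\{1,\ldots ,n\}$, the left-hand side is expressed through $S_{n}$, $S_{n-2}$, $S_{n-4}$ and the low-index hybrid numbers $\mathbb{K}_{w,0},\mathbb{K}_{w,1},\mathbb{K}_{w,2},\mathbb{K}_{w,3}$.

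Next I would trade the shorter partial sums for $S_{n}$ via $S_{n-2}=S_{n}-\mathbb{K}_{w,n}-\mathbb{K}_{w,n-1}$ and $S_{n-4}=S_{n}-\mathbb{K}_{w,n}-\mathbb{K}_{w,n-1}-\mathbb{K}_{w,n-2}-\mathbb{K}_{w,n-3}$. Collecting the coefficient of $S_{n}$ yields precisely $1-\left( ab+2c\right) +c^{2}=c^{2}-ab-2c+1$, the denominator in the statement, while every other contribution is a fixed combination of hybrid numbers with subscripts near $0$ and near $n$. Assuming $c^{2}-ab-2c+1\neq 0$, dividing by this factor produces a closed form, and only the shape of the boundary part remains to be checked.

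The crux is this last simplification. At this stage the boundary part still carries the unwanted terms $\mathbb{K}_{w,3}$, $\mathbb{K}_{w,n-2}$, $\mathbb{K}_{w,n-3}$ multiplied by $ab+2c$, none of which appear in the target expression. I would clear them by applying the recurrence once more in the rearranged form $\left( ab+2c\right) \mathbb{K}_{w,m}=\mathbb{K}_{w,m+2}+c^{2}\mathbb{K}_{w,m-2}$ at $m=1,\,n-1,\,n$; the terms $\mathbb{K}_{w,3}$, $c^{2}\mathbb{K}_{w,n-2}$ and $c^{2}\mathbb{K}_{w,n-3}$ then cancel in pairs and what is left is exactly
\begin{equation*}
c^{2}\left( \mathbb{K}_{w,n}+\mathbb{K}_{w,n-1}-\mathbb{K}_{w,0}-\mathbb{K}_{w,-1}\right) -\mathbb{K}_{w,n+2}-\mathbb{K}_{w,n+1}+\mathbb{K}_{w,2}+\mathbb{K}_{w,1}.
\end{equation*}
The delicate point is that the case $m=1$ introduces $\mathbb{K}_{w,-1}$, so the computation must be carried out with the negative-subscript convention (\ref{n}) in force; one should first confirm that the fourth-order relation continues to hold for these small and negative indices, after which the cancellations are purely mechanical.
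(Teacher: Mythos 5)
Your argument is correct, and I have checked that the algebra closes: summing $\mathbb{K}_{w,r}-(ab+2c)\mathbb{K}_{w,r-2}+c^{2}\mathbb{K}_{w,r-4}=0$ over $r=4,\dots ,n$ gives
\begin{equation*}
\bigl(1-(ab+2c)+c^{2}\bigr)S_{n}=\mathbb{K}_{w,1}+\mathbb{K}_{w,2}+\mathbb{K}_{w,3}-(ab+2c)\bigl(\mathbb{K}_{w,n}+\mathbb{K}_{w,n-1}+\mathbb{K}_{w,1}\bigr)+c^{2}\bigl(\mathbb{K}_{w,n}+\mathbb{K}_{w,n-1}+\mathbb{K}_{w,n-2}+\mathbb{K}_{w,n-3}-\mathbb{K}_{w,0}\bigr),
\end{equation*}
and your final application of $(ab+2c)\mathbb{K}_{w,m}=\mathbb{K}_{w,m+2}+c^{2}\mathbb{K}_{w,m-2}$ at $m=n,\,n-1,\,1$ does cancel $\mathbb{K}_{w,3}$, $c^{2}\mathbb{K}_{w,n-2}$ and $c^{2}\mathbb{K}_{w,n-3}$ and leaves exactly the stated numerator. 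This is, however, a genuinely different route from the paper's. The paper works from the Binet formula: it splits $\sum_{r=1}^{n}\mathbb{K}_{w,r}$ into even- and odd-indexed parts, sums the resulting geometric series in $\alpha ^{2}/(ab)$ and $\beta ^{2}/(ab)$, and reassembles the answer using $(\alpha ^{2}-ab)(\beta ^{2}-ab)=a^{2}b^{2}(c^{2}-ab-2c+1)$. Your telescoping of the fourth-order recurrence is more elementary (no Binet formula, no parity split, no geometric series), at the price of two housekeeping obligations that the Binet route avoids or hides: (1) the summation range $r=4,\dots ,n$ is empty or degenerate for $n\leq 3$ (and $S_{n-2}$, $S_{n-4}$ are not literal partial sums there), so the cases $n=1,2,3$ must be verified directly against the closed form; (2) the use of the recurrence at $m=1$ requires $\mathbb{K}_{w,-1}$, hence one must check that the negative-subscript convention (\ref{n}) gives $w_{-1}=(w_{1}-bw_{0})/c$ so that the second-order, and hence the fourth-order, recurrence persists at these indices --- you flag this correctly, and it does hold. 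Both proofs implicitly assume $c^{2}-ab-2c+1\neq 0$ (note this fails, e.g., for the Jacobsthal parameters $a=b=1$, $c=2$), so your explicit mention of that hypothesis is a point in your favour rather than a defect.
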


\begin{proof}
First note that by considering the formula in (\ref{n}), the bi-periodic
Horadam hybrid numbers for negative subscripts can be defined as%
\begin{equation*}
\mathbb{K}_{w,-n}=w_{-n}+w_{-n+1}i+w_{-n+2}\epsilon +w_{-n+3}h.
\end{equation*}

If $n$ is odd, we have\ 
\begin{equation*}
\sum_{r=1}^{n}\mathbb{K}_{w,r}=\sum_{r=1}^{\frac{n-1}{2}}\mathbb{K}%
_{w,2r}+\sum_{r=1}^{\frac{n+1}{2}}\mathbb{K}_{w,2r-1}\text{ \ \ \ \ \ \ \ \
\ \ \ \ \ \ \ \ \ \ \ \ \ \ \ \ \ \ \ \ \ \ \ \ \ \ \ \ \ \ \ \ \ \ \ \ \ \
\ \ \ \ \ \ \ \ \ \ \ \ \ \ \ \ \ \ \ \ \ \ \ \ \ \ \ \ \ \ \ \ \ \ \ \ \ \
\ \ \ \ \ \ \ \ \ \ \ \ \ }
\end{equation*}%
\begin{eqnarray*}
&=&\frac{a}{\left( ab\right) ^{\frac{n-1}{2}}}\left( \frac{A\alpha _{\xi
\left( n\right) }\alpha ^{n+1}-A\alpha _{\xi \left( n\right) }\alpha
^{2}\left( ab\right) ^{\frac{n-1}{2}}}{\alpha ^{2}-ab}+\frac{-B\beta _{\xi
\left( n\right) }\beta ^{n+1}+B\beta _{\xi \left( n\right) }\beta ^{2}\left(
ab\right) ^{\frac{n-1}{2}}}{\beta ^{2}-ab}\right) \\
&&+\frac{ab}{\left( ab\right) ^{\frac{n+1}{2}}}\left( \frac{A\alpha _{\xi
\left( n\right) }\alpha ^{n+2}-A\alpha _{\xi \left( n\right) }\alpha \left(
ab\right) ^{\frac{n+1}{2}}}{\alpha ^{2}-ab}+\frac{-B\beta _{\xi \left(
n\right) }\beta ^{n+2}+B\beta _{\xi \left( n\right) }\beta \left( ab\right)
^{\frac{n+1}{2}}}{\beta ^{2}-ab}\right) \text{ \ \ \ \ \ \ \ \ \ \ \ \ \ \ \
\ \ \ }
\end{eqnarray*}%
\begin{equation*}
=\sum_{r=1}^{\frac{n-1}{2}}\frac{a}{\left( ab\right) ^{r}}\left( A\alpha
_{\xi \left( n\right) }\alpha ^{2r}-B\beta _{\xi \left( n\right) }\beta
^{2r}\right) +\sum_{r=1}^{\frac{n-1}{2}+1}\frac{ab}{\left( ab\right) ^{r}}%
\left( A\alpha _{\xi \left( n\right) }\alpha ^{2r-1}-B\beta _{\xi \left(
n\right) }\beta ^{2r-1}\right) \text{ \ \ \ \ \ \ \ \ \ \ \ \ \ \ \ \ \ \ \
\ \ \ \ \ \ }
\end{equation*}%
\begin{eqnarray*}
&=&aA\alpha _{\xi \left( n\right) }\sum_{r=1}^{\frac{n-1}{2}}\left( \frac{%
\alpha ^{2}}{ab}\right) ^{r}-aB\beta _{\xi \left( n\right) }\sum_{r=1}^{%
\frac{n-1}{2}}\left( \frac{\beta ^{2}}{ab}\right) ^{r} \\
&&+\frac{ab}{\alpha }A\alpha _{\xi \left( n\right) }\sum_{r=1}^{\frac{n-1}{2}%
+1}\left( \frac{\alpha ^{2}}{ab}\right) ^{r}-\frac{ab}{\beta }B\beta _{\xi
\left( n\right) }\sum_{r=1}^{\frac{n-1}{2}+1}\left( \frac{\beta ^{2}}{ab}%
\right) ^{r}\text{ \ \ \ \ \ \ \ \ \ \ \ \ \ \ \ \ \ \ \ \ \ \ \ \ \ \ \ \ \
\ \ \ \ \ \ \ \ \ \ \ \ \ \ \ \ \ \ \ \ \ \ \ \ \ \ \ \ \ \ \ \ \ \ \ }
\end{eqnarray*}%
\begin{eqnarray*}
&=&aA\alpha _{\xi \left( n\right) }\left( \frac{\left( \frac{\alpha ^{2}}{ab}%
\right) ^{\frac{n-1}{2}+1}-\frac{\alpha ^{2}}{ab}}{\frac{\alpha ^{2}}{ab}-1}%
\right) -aB\beta _{\xi \left( n\right) }\left( \frac{\left( \frac{\beta ^{2}%
}{ab}\right) ^{\frac{n-1}{2}+1}-\frac{\beta ^{2}}{ab}}{\frac{\beta ^{2}}{ab}%
-1}\right) \\
&&+\frac{ab}{\alpha }A\alpha _{\xi \left( n\right) }\left( \frac{\left( 
\frac{\alpha ^{2}}{ab}\right) ^{\frac{n-1}{2}+2}-\frac{\alpha ^{2}}{ab}}{%
\frac{\alpha ^{2}}{ab}-1}\right) -\frac{ab}{\beta }B\beta _{\xi \left(
n\right) }\left( \frac{\left( \frac{\beta ^{2}}{ab}\right) ^{\frac{n-1}{2}%
+2}-\frac{\beta ^{2}}{ab}}{\frac{\beta ^{2}}{ab}-1}\right) \text{ \ \ \ \ \
\ \ \ \ \ \ \ \ \ \ \ \ \ \ \ \ \ \ \ \ \ \ \ \ \ \ \ }
\end{eqnarray*}%
\begin{eqnarray*}
&=&\frac{a}{\left( ab\right) ^{\frac{n-1}{2}}\left( \alpha ^{2}-ab\right)
\left( \beta ^{2}-ab\right) }\times \\
&&\left( \left( \alpha \beta \right) ^{2}\left( A\alpha _{\xi \left(
n\right) }\alpha ^{n-1}-B\beta _{\xi \left( n\right) }\beta ^{n-1}\right)
-ab\left( A\alpha _{\xi \left( n\right) }\alpha ^{n+1}-B\beta _{\xi \left(
n\right) }\beta ^{n+1}\right) \right. \\
&&\left. +\left( ab\right) ^{\frac{n-1}{2}}\left( -\left( \alpha \beta
\right) ^{2}\left( A\alpha _{\xi \left( n\right) }-B\beta _{\xi \left(
n\right) }\right) +ab\left( A\alpha _{\xi \left( n\right) }\alpha
^{2}-B\beta _{\xi \left( n\right) }\beta ^{2}\right) \right) \right) \text{\
\ \ \ \ \ \ \ \ \ \ \ \ \ \ \ \ \ \ \ \ \ \ \ \ \ \ \ \ \ \ \ \ \ \ \ \ \ \
\ \ \ }
\end{eqnarray*}%
\begin{equation*}
=\frac{c^{2}\left( \mathbb{K}_{w,n-1}-\mathbb{K}_{w,0}+\mathbb{K}_{w,n}-%
\mathbb{K}_{w,-1}\right) -\mathbb{K}_{w,n+1}-\mathbb{K}_{w,n+2}+\mathbb{K}%
_{w,2}+\mathbb{K}_{w,1}}{c^{2}-ab-2c+1}\text{ \ \ \ \ \ \ \ \ \ \ \ \ \ \ \
\ \ \ \ \ \ \ \ \ \ \ \ \ \ \ \ \ \ \ \ \ \ \ \ \ }
\end{equation*}

If $n$ is even, we have%
\begin{equation*}
\sum_{r=1}^{n}\mathbb{K}_{w,r}=\sum_{r=1}^{\frac{n}{2}}\mathbb{K}%
_{w,2r}+\sum_{r=1}^{\frac{n}{2}}\mathbb{K}_{w,2r-1}.
\end{equation*}%
In a similar manner, we get the desired result.
\end{proof}

\begin{theorem}
For nonnegative integers $n$ and $r$, we have%
\begin{equation*}
\left( i\right) \sum_{i=0}^{n}\binom{n}{i}\left( -c\right) ^{n-i}\mathbb{K}%
_{w,2i+r}=\left( ab\right) ^{\frac{n}{2}}\left( \frac{a}{b}\right) ^{\frac{%
\xi \left( n+r\right) -\xi \left( r\right) }{2}}\mathbb{K}_{w,n+r}.\text{\ \
\ \ \ \ \ \ \ \ \ \ \ \ \ \ \ \ \ \ \ \ \ \ \ \ \ \ \ \ \ \ \ \ \ \ \ \ \ \
\ \ \ \ \ \ \ \ \ \ \ \ \ \ \ \ \ \ \ \ \ \ \ }
\end{equation*}%
\begin{equation*}
\left( ii\right) \sum_{i=0}^{n}\binom{n}{i}c^{n-i}\left( ab\right) ^{\frac{i%
}{2}}\left( \frac{a}{b}\right) ^{\frac{\xi \left( i+r\right) -\xi \left(
r\right) }{2}}\mathbb{K}_{w,i+r}=\mathbb{K}_{w,2n+r}.\text{ \ \ \ \ \ \ \ \
\ \ \ \ \ \ \ \ \ \ \ \ \ \ \ \ \ \ \ \ \ \ \ \ \ \ \ \ \ \ \ \ \ \ \ \ \ \
\ \ \ \ \ \ \ \ \ \ \ \ \ \ \ \ \ \ \ \ \ }
\end{equation*}
\end{theorem}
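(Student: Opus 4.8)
The plan is to derive both identities from the Binet formula for $\mathbb{K}_{w,n}$ together with the binomial theorem, exploiting the characteristic relations $\frac{\alpha^{2}}{ab}-c=\alpha$ and $\frac{\beta^{2}}{ab}-c=\beta$, which hold because $\alpha,\beta$ are the roots of $x^{2}-abx-abc$.

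For $(i)$, the key observation is that every index $2i+r$ has the same parity as $r$, so that $\xi(2i+r)=\xi(r)$, $\xi(2i+r+1)=\xi(r+1)$ and $\left\lfloor\frac{2i+r}{2}\right\rfloor=i+\left\lfloor\frac{r}{2}\right\rfloor$. Hence the hybrid coefficients $\alpha_{\xi(r)},\beta_{\xi(r)}$ and the scalar $a^{\xi(r+1)}$ are independent of the summation index $i$, and the Binet formula yields
\[
\mathbb{K}_{w,2i+r}=\frac{a^{\xi(r+1)}}{(ab)^{\lfloor r/2\rfloor}}\,(ab)^{-i}\left(A\alpha_{\xi(r)}\alpha^{2i+r}-B\beta_{\xi(r)}\beta^{2i+r}\right).
\]
First I would pull the $i$-independent factor $\frac{a^{\xi(r+1)}}{(ab)^{\lfloor r/2\rfloor}}$ and the hybrid numbers $A\alpha_{\xi(r)}\alpha^{r}$, $B\beta_{\xi(r)}\beta^{r}$ out of the sum, leaving the two scalar sums $\sum_{i}\binom{n}{i}(-c)^{n-i}\left(\frac{\alpha^{2}}{ab}\right)^{i}$ and its analogue in $\beta$. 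By the binomial theorem these equal $\left(\frac{\alpha^{2}}{ab}-c\right)^{n}=\alpha^{n}$ and $\beta^{n}$. This collapses the left-hand side to
\[
\frac{a^{\xi(r+1)}}{(ab)^{\lfloor r/2\rfloor}}\left(A\alpha_{\xi(r)}\alpha^{n+r}-B\beta_{\xi(r)}\beta^{n+r}\right).
\]

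The remaining, and genuinely delicate, step is to match this against the Binet formula for $\mathbb{K}_{w,n+r}$, which carries the coefficients $\alpha_{\xi(n+r)},\beta_{\xi(n+r)}$ and the exponent $\left\lfloor\frac{n+r}{2}\right\rfloor$. When $n$ is even one has $\xi(n+r)=\xi(r)$ and $\left\lfloor\frac{n+r}{2}\right\rfloor=\frac{n}{2}+\left\lfloor\frac{r}{2}\right\rfloor$, so the hybrid coefficients already agree and the floor discrepancy produces exactly $(ab)^{n/2}$ while $(a/b)^{(\xi(n+r)-\xi(r))/2}=1$; the claim then follows at once. The hard part is the odd-$n$ case: there $\xi(n+r)\neq\xi(r)$, so $\alpha_{\xi(r)}$ and $\alpha_{\xi(n+r)}$ differ precisely in their $i$- and $h$-components by a factor $(a/b)^{\pm1}$, while their scalar and $\epsilon$-components coincide. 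I would therefore argue by cases on the parities of $n$ and $r$, keep track of the half-integer power coming from $(ab)^{n/2}$, and show that the factor $(a/b)^{(\xi(n+r)-\xi(r))/2}$ is exactly what reconciles $\alpha_{\xi(r)}$ with $\alpha_{\xi(n+r)}$ (equivalently, what mediates the passage between $\{w_n\}$ and the switched sequence $\{\widehat{w}_n\}$ introduced after Lemma~\ref{l1}). This coefficient-reconciliation is the main obstacle, because a single scalar scales all four hybrid components uniformly whereas the two coefficients disagree only in two of them; forcing it to succeed is precisely what pins down the parity bookkeeping in the exponent $\frac{\xi(n+r)-\xi(r)}{2}$.

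For $(ii)$ I would run the same machine in the reverse direction. The factor $(ab)^{i/2}(a/b)^{(\xi(i+r)-\xi(r))/2}$ multiplying $\mathbb{K}_{w,i+r}$ is exactly the factor from $(i)$ with $n$ replaced by $i$, so by $(i)$ it equals $\frac{a^{\xi(r+1)}}{(ab)^{\lfloor r/2\rfloor}}\left(A\alpha_{\xi(r)}\alpha^{i+r}-B\beta_{\xi(r)}\beta^{i+r}\right)$. Substituting this into the left-hand side, pulling out the $i$-independent data as before, and applying the binomial theorem through $\sum_{i}\binom{n}{i}c^{n-i}\alpha^{i}=(\alpha+c)^{n}=\left(\frac{\alpha^{2}}{ab}\right)^{n}=\frac{\alpha^{2n}}{(ab)^{n}}$ (and likewise for $\beta$) collapses the sum to $\frac{a^{\xi(r+1)}}{(ab)^{\lfloor r/2\rfloor+n}}\left(A\alpha_{\xi(r)}\alpha^{2n+r}-B\beta_{\xi(r)}\beta^{2n+r}\right)$. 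Now the target index $2n+r$ has the same parity as $r$, so $\alpha_{\xi(2n+r)}=\alpha_{\xi(r)}$ and no coefficient mismatch occurs; comparing exponents via $\left\lfloor\frac{2n+r}{2}\right\rfloor=n+\left\lfloor\frac{r}{2}\right\rfloor$ shows this is exactly the Binet expression for $\mathbb{K}_{w,2n+r}$. Thus $(ii)$ is the cleaner identity, its only subtlety being the appeal to $(i)$; alternatively it can be proved directly by the same Binet-plus-binomial computation without invoking $(i)$.
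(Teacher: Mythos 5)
Your overall route (Binet formula plus the binomial theorem via $\frac{\alpha^{2}}{ab}-c=\alpha$, $\frac{\beta^{2}}{ab}-c=\beta$) is exactly the paper's, and your treatment of the even-$n$ case of $(i)$ is complete and correct. The problem is the step you explicitly defer: the ``coefficient reconciliation'' for odd $n$. You correctly isolate the obstacle --- after the binomial collapse the left side carries the hybrid coefficients $\alpha_{\xi(r)},\beta_{\xi(r)}$, while the Binet expression for $\mathbb{K}_{w,n+r}$ carries $\alpha_{\xi(n+r)},\beta_{\xi(n+r)}$, and these differ in their $i$- and $h$-components by a factor $a/b$ --- but your hope that the single scalar $(a/b)^{(\xi(n+r)-\xi(r))/2}$ mediates between them cannot be realized. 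Applying the (true) scalar identity to each of the four components of $\mathbb{K}_{w,2i+r}$ produces the factors $(a/b)^{(\xi(n+r+j)-\xi(r+j))/2}$ for $j=0,1,2,3$, which for odd $n$ alternate between $(a/b)^{1/2}$ and $(a/b)^{-1/2}$; no uniform scalar multiple of $\mathbb{K}_{w,n+r}$ can reproduce that. Concretely, take $(w_0,w_1;a,b,c)=(0,1;1,2,1)$, so $u_0,\dots,u_5=0,1,1,3,4,11$, and take $n=1$, $r=0$: the left side of $(i)$ is $\mathbb{K}_{u,2}-\mathbb{K}_{u,0}=1+2i+3\epsilon+8h$, whereas the right side is $a\,\mathbb{K}_{u,1}=1+i+3\epsilon+4h$. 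So the step you postponed would fail, and in fact the identity itself is false for odd $n$ whenever $a\neq b$. Your derivation of $(ii)$ inherits the same defect, since it substitutes the odd-$i$ instances of $(i)$; the same data with $n=1$, $r=0$ gives $c\,\mathbb{K}_{u,0}+a\,\mathbb{K}_{u,1}=1+2i+4\epsilon+7h\neq\mathbb{K}_{u,2}=1+3i+4\epsilon+11h$.

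For what it is worth, the paper's own proof conceals precisely this gap by writing $\alpha_{\xi(n)}$ indiscriminately for what should be $\alpha_{\xi(2i+r)}=\alpha_{\xi(r)}$ inside the sum and $\alpha_{\xi(n+r)}$ in the target, and then silently identifying the two in the final line; your write-up is more honest in that it surfaces the mismatch rather than hiding it. The statement is salvageable only where the mismatch is absent: in $(i)$ for even $n$, and in general when $a=b$ (the classical Horadam/$(p,q)$ specializations), which is presumably why the error went unnoticed. If you want a correct general version, you must either restrict to even shifts or replace the scalar prefactor by one that acts differently on the four hybrid components.
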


\begin{proof}
$\left( i\right) $ From the Binet formula of the bi-periodic Horadam hybrid
numbers, we get 
\begin{equation*}
\sum_{i=0}^{n}\binom{n}{i}\left( -c\right) ^{n-i}\mathbb{K}_{w,2i+r}\text{ \
\ \ \ \ \ \ \ \ \ \ \ \ \ \ \ \ \ \ \ \ \ \ \ \ \ \ \ \ \ \ \ \ \ \ \ \ \ \
\ \ \ \ \ \ \ \ \ \ \ \ \ \ \ \ \ \ \ \ \ \ \ \ \ \ \ \ \ \ \ \ \ \ \ \ \ \
\ \ \ \ \ \ \ \ \ \ \ \ \ \ \ \ \ \ \ \ \ \ \ \ \ \ \ \ \ \ \ \ \ \ \ \ \ \ }
\end{equation*}%
\begin{equation*}
=\ \sum_{i=0}^{n}\binom{n}{i}\left( -c\right) ^{n-i}\frac{a^{\xi \left(
2i+r+1\right) }}{\left( ab\right) ^{\left\lfloor \frac{2i+r}{2}\right\rfloor
}}\left( A\alpha _{\xi \left( n\right) }\alpha ^{2i+r}-B\beta _{\xi \left(
n\right) }\beta ^{2i+r}\right) \text{ \ \ \ \ \ \ \ \ \ \ \ \ \ \ \ \ \ \ \
\ \ \ \ \ \ \ \ \ \ \ \ \ \ \ \ \ \ \ \ \ \ \ \ \ \ \ \ \ \ \ \ \ \ \ \ \ \ }
\end{equation*}%
\begin{equation*}
=\frac{a^{\xi \left( r+1\right) }}{\left( ab\right) ^{\left\lfloor \frac{r}{2%
}\right\rfloor }}A\alpha _{\xi \left( n\right) }\alpha ^{r}\sum_{i=0}^{n}%
\binom{n}{i}\left( -c\right) ^{n-i}\left( \frac{\alpha ^{2}}{ab}\right) ^{i}-%
\frac{a^{\xi \left( r+1\right) }}{\left( ab\right) ^{\left\lfloor \frac{r}{2}%
\right\rfloor }}B\beta _{\xi \left( n\right) }\beta ^{r}\sum_{i=0}^{n}\binom{%
n}{i}\left( -c\right) ^{n-i}\left( \frac{\beta ^{2}}{ab}\right) ^{i}\text{ \ 
}
\end{equation*}%
\begin{equation*}
=\frac{a^{\xi \left( r+1\right) }}{\left( ab\right) ^{\left\lfloor \frac{r}{2%
}\right\rfloor }}A\alpha _{\xi \left( n\right) }\alpha ^{r}\left( \frac{%
\alpha ^{2}}{ab}-c\right) ^{n}-\frac{a^{\xi \left( r+1\right) }}{\left(
ab\right) ^{\left\lfloor \frac{r}{2}\right\rfloor }}B\beta _{\xi \left(
n\right) }\beta ^{r}\left( \frac{\beta ^{2}}{ab}-c\right) ^{n}\text{ \ \ \ \
\ \ \ \ \ \ \ \ \ \ \ \ \ \ \ \ \ \ \ \ \ \ \ \ \ \ \ \ \ \ \ \ \ \ \ \ \ \
\ }
\end{equation*}%
\begin{equation*}
=\frac{a^{\xi \left( r+1\right) }}{\left( ab\right) ^{\left\lfloor \frac{r}{2%
}\right\rfloor }}\left( A\alpha _{\xi \left( n\right) }\alpha ^{n+r}-B\beta
_{\xi \left( n\right) }\beta ^{n+r}\right) =\frac{a^{\xi \left( r+1\right) }%
}{\left( ab\right) ^{\left\lfloor \frac{r}{2}\right\rfloor }}\frac{\left(
ab\right) ^{\left\lfloor \frac{n+r}{2}\right\rfloor }}{a^{\xi \left(
n+r+1\right) }}\mathbb{K}_{w,n+r}\text{ \ \ \ \ \ \ \ \ \ \ \ \ \ \ \ \ \ \
\ \ \ \ \ \ \ \ \ \ \ \ \ \ \ \ \ \ \ \ \ \ }
\end{equation*}%
\begin{equation*}
=\frac{a^{-\xi \left( r\right) +\xi \left( n+r\right) }}{\left( ab\right)
^{\left\lfloor \frac{r}{2}\right\rfloor -\left\lfloor \frac{n+r}{2}%
\right\rfloor }}\mathbb{K}_{w,n+r}=\left( ab\right) ^{\frac{n}{2}}\left( 
\frac{a}{b}\right) ^{\frac{-\xi \left( r\right) +\xi \left( n+r\right) }{2}}%
\mathbb{K}_{w,n+r.}\text{ \ \ \ \ \ \ \ \ \ \ \ \ \ \ \ \ \ \ \ \ \ \ \ \ \
\ \ \ \ \ \ \ \ \ \ \ \ \ \ \ \ \ \ \ \ \ \ \ \ \ \ \ \ \ \ \ \ \ \ \ \ \ \ }
\end{equation*}

$\left( ii\right) $ It can be proven similarly.
\end{proof}

\section{Some relations between generalized bi-periodic Fibonacci and Lucas
hybrid numbers}

Now\ we state some relations between bi-periodic Fibonacci and bi-periodic
Lucas hybrid numbers. To do this, we consider the generalized bi-periodic
Fibonacci hybrid numbers $\mathbb{K}_{u,n}$ and the generalized bi-periodic
Lucas hybrid numbers $\mathbb{K}_{v,n}$ which are stated in Table 1.

The Binet formula of $\mathbb{K}_{u,n}$ is%
\begin{equation}
\mathbb{K}_{u,n}=\frac{a^{\xi \left( n+1\right) }}{\left( ab\right)
^{\left\lfloor \frac{n}{2}\right\rfloor }}\left( \frac{\alpha _{\xi \left(
n\right) }\alpha ^{n}-\beta _{\xi \left( n\right) }\beta ^{n}}{\alpha -\beta 
}\right) ,  \label{13}
\end{equation}%
and the Binet formula of $\mathbb{K}_{v,n}$ is 
\begin{equation}
\mathbb{K}_{v,n}=\frac{a^{-\xi \left( n\right) }}{\left( ab\right)
^{\left\lfloor \frac{n}{2}\right\rfloor }}\left( \alpha _{\xi \left(
n\right) }\alpha ^{n}+\beta _{\xi \left( n\right) }\beta ^{n}\right) .
\label{14}
\end{equation}

\begin{theorem}
For any natural number $m,n$ with $n>m,$ we have%
\begin{equation*}
\left( i\right) \mathbb{K}_{u,n+1}+c\mathbb{K}_{u,n-1}=\left( \frac{a}{b}%
\right) ^{\xi \left( n\right) }\mathbb{K}_{v,n}\text{ \ \ \ \ \ \ \ \ \ \ \
\ \ \ \ \ \ \ \ \ \ \ \ \ \ \ \ \ \ \ \ \ \ \ \ \ \ \ \ \ \ \ \ \ \ \ \ \ \
\ \ \ \ \ \ \ \ \ \ \ \ \ \ \ \ \ \ \ \ \ \ \ \ \ \ \ \ \ \ \ \ \ \ \ \ \ \
\ \ \ \ \ \ \ \ \ \ \ \ \ \ }
\end{equation*}%
\begin{equation*}
\left( ii\right) \mathbb{K}_{v,n+1}+c\mathbb{K}_{v,n-1}=\left( \frac{a}{b}%
\right) ^{\xi \left( n\right) }\Delta ^{2}\mathbb{K}_{u,n}\text{ \ \ \ \ \ \
\ \ \ \ \ \ \ \ \ \ \ \ \ \ \ \ \ \ \ \ \ \ \ \ \ \ \ \ \ \ \ \ \ \ \ \ \ \
\ \ \ \ \ \ \ \ \ \ \ \ \ \ \ \ \ \ \ \ \ \ \ \ \ \ \ \ \ \ \ \ \ \ \ \ \ \
\ \ \ \ \ \ \ \ \ \ \ \ \ \ }
\end{equation*}%
\begin{equation*}
\left( iii\right) \mathbb{K}_{u,n}\mathbb{K}_{v,m}-\mathbb{K}_{u,m}\mathbb{K}%
_{v,n}=\left\{ 
\begin{array}{cc}
2\left( -c\right) ^{m}u_{n-m}\left( \mathbb{K}_{v,0}-\theta \right) , & 
\text{ if }n\text{ is even} \\ 
2\left( \frac{a}{b}\right) ^{-\xi \left( m\right) }\left( -c\right)
^{m}u_{n-m}\left( \mathbb{K}_{\widehat{v},0}-\widehat{\theta }\right) , & 
\text{if }n\text{ is odd.}%
\end{array}%
\right. \text{ \ \ \ \ \ \ \ \ \ \ \ \ \ \ \ \ \ \ \ \ \ \ \ \ \ \ \ \ \ \ \
\ }
\end{equation*}
\begin{equation*}
\left( iv\right) \mathbb{K}_{v,n}^{2}-\mathbb{K}_{u,n}^{2}=\left\{ 
\begin{array}{cc}
\begin{array}{c}
\left( \frac{\Delta ^{2}-a^{2}}{\Delta ^{2}}\right) \left( \mathbb{K}%
_{v,0}+\mu _{e}\right) v_{2n}+\left( \frac{\Delta ^{2}-a^{2}}{a^{2}}\right)
\left( \mathbb{K}_{u,0}+\gamma _{e}\right) u_{2n} \\ 
+2\left( -c\right) ^{n}\left( \frac{\Delta ^{2}+a^{2}}{\Delta ^{2}}\right)
\left( \mathbb{K}_{v,0}-\theta \right) ,%
\end{array}
& \text{ if }n\text{ is even} \\ 
\begin{array}{c}
\left( \frac{\Delta ^{2}-a^{2}}{\Delta ^{2}}\right) \left( \mathbb{K}_{%
\widehat{v},0}+\mu _{o}\right) \frac{bv_{2n}}{a}+\left( \frac{\Delta
^{2}-a^{2}}{a^{2}}\right) \left( \mathbb{K}_{\widehat{u},0}+\gamma
_{o}\right) u_{2n} \\ 
+\frac{2b}{a}\left( -c\right) ^{n}\left( \frac{\Delta ^{2}+a^{2}}{\Delta ^{2}%
}\right) \left( \mathbb{K}_{\widehat{v},0}-\widehat{\theta }\right) ,%
\end{array}
& \text{if }n\text{ is odd.}%
\end{array}%
\right. \text{ \ \ \ \ \ }
\end{equation*}
\end{theorem}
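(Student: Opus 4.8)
The plan is to establish all four identities directly from the Binet formulas \eqref{13} and \eqref{14}, using throughout the root relations $\alpha+\beta=ab$, $\alpha\beta=-abc$ and $\alpha-\beta=\Delta$, together with the two multiplication lemmas. The single computational device that drives parts $(i)$ and $(ii)$ is the factorisation $\alpha^{2}+abc=\alpha^{2}-\alpha\beta=\alpha(\alpha-\beta)$ and its companion $\beta^{2}+abc=-\beta(\alpha-\beta)$, which convert a two-step combination of consecutive terms into a single power.

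For $(i)$ I would substitute \eqref{13} for $\mathbb{K}_{u,n+1}$ and $\mathbb{K}_{u,n-1}$, noting that $\xi(n+1)=\xi(n-1)$ so the same hybrid coefficients $\alpha_{\xi(n+1)}$ and $\beta_{\xi(n+1)}$ are shared by both terms and can be pulled out. After clearing the $(ab)^{\lfloor\cdot\rfloor}$ prefactors against $c\cdot ab=-\alpha\beta$, the bracket becomes $\alpha_{\xi(n+1)}\alpha^{n-1}(\alpha^{2}+abc)\mp\cdots$, and the factorisation above collapses it to $(\alpha-\beta)\bigl(\alpha_{\xi(n+1)}\alpha^{n}+\beta_{\xi(n+1)}\beta^{n}\bigr)$. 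Recognising the right-hand factor as the numerator of \eqref{14} and reconciling the surviving powers of $a$ and $ab$ across the even/odd cases produces the weight $(a/b)^{\xi(n)}$ and yields $(a/b)^{\xi(n)}\mathbb{K}_{v,n}$. Part $(ii)$ is identical in structure, except that starting from the ``sum'' form \eqref{14} the same collapse leaves a difference $\alpha_{\xi(n)}\alpha^{n}-\beta_{\xi(n)}\beta^{n}$ carrying an extra factor $(\alpha-\beta)$; combined with the $1/\Delta$ already present in \eqref{13} this is exactly what manufactures the $\Delta^{2}$ in the statement.

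For $(iii)$ I would expand the antisymmetric product $\mathbb{K}_{u,n}\mathbb{K}_{v,m}-\mathbb{K}_{u,m}\mathbb{K}_{v,n}$ via \eqref{13} and \eqref{14}. A first useful observation is that the two scalar prefactors coincide, since $\xi(n+1)-\xi(m)=\xi(m+1)-\xi(n)$; after this the ``diagonal'' contributions $\alpha_{\xi}\alpha_{\xi}\,\alpha^{n+m}$ and $\beta_{\xi}\beta_{\xi}\,\beta^{n+m}$ cancel and only the mixed products survive, combining into $\bigl(\alpha_{\xi(n)}\beta_{\xi(n)}+\beta_{\xi(n)}\alpha_{\xi(n)}\bigr)\bigl(\alpha^{n}\beta^{m}-\alpha^{m}\beta^{n}\bigr)$. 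I would then invoke the symmetric combination \eqref{7}, namely $\alpha_{\xi}\beta_{\xi}+\beta_{\xi}\alpha_{\xi}=2(\mathbb{K}_{v,0}-\theta)$ in the even case, and telescope $\alpha^{n}\beta^{m}-\alpha^{m}\beta^{n}=(\alpha\beta)^{m}(\alpha^{n-m}-\beta^{n-m})$, turning $(\alpha\beta)^{m}=(-abc)^{m}$ and the Binet form of $u_{n-m}$ into the announced $2(-c)^{m}u_{n-m}(\mathbb{K}_{v,0}-\theta)$.

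Part $(iv)$ is the heaviest and the place where Lemma \ref{l2} is essential. Squaring \eqref{14} and \eqref{13} produces, in each case, the genuine hybrid squares $\alpha_{\xi(n)}\alpha_{\xi(n)}$, $\beta_{\xi(n)}\beta_{\xi(n)}$ and the symmetric cross term $\alpha_{\xi(n)}\beta_{\xi(n)}+\beta_{\xi(n)}\alpha_{\xi(n)}$ multiplied by $(\alpha\beta)^{n}$. I would substitute \eqref{9} and \eqref{10} for the squares and \eqref{7} for the cross term, then convert $\alpha^{2n}+\beta^{2n}$ and $\alpha^{2n}-\beta^{2n}$ into $v_{2n}$ and $u_{2n}$ via \eqref{v} and \eqref{u} and $(\alpha\beta)^{n}$ into $(-c)^{n}$. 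Subtracting the two squares, the common factor $1/(ab)^{n}$ factors out and the two distinct scalar prefactors $a^{-2\xi(n)}$ (from $\mathbb{K}_{v,n}^{2}$) and $a^{2\xi(n+1)}/\Delta^{2}$ (from $\mathbb{K}_{u,n}^{2}$) combine into the coefficients $\tfrac{\Delta^{2}-a^{2}}{\Delta^{2}}$, $\tfrac{\Delta^{2}-a^{2}}{a^{2}}$ and $\tfrac{\Delta^{2}+a^{2}}{\Delta^{2}}$ seen in the statement. The main obstacle throughout is the bookkeeping of the parity-dependent hybrid coefficients: because $\alpha_{\xi(n)}\ne\alpha_{\xi(n+1)}$ in general (they differ by the $(a/b)^{\xi(\cdot)}$ weights) and because the algebra is non-commutative, one must keep the even and odd cases rigorously separate and, in $(iii)$, pass to the switched auxiliary sequences $\widehat u,\widehat v$ and carry the $(a/b)^{-\xi(m)}$ factor whenever $n$ and $m$ have opposite parities; this is precisely where a careless cancellation of the mixed-index products would go wrong.
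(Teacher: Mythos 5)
Your proposal follows essentially the same route as the paper: every part is read off the Binet formulas (\ref{13})--(\ref{14}) together with the root relations, using the collapse $\alpha^{2}+abc=\alpha\Delta$, $\beta^{2}+abc=-\beta\Delta$ for $(i)$--$(ii)$, relation (\ref{7}) for $(iii)$, and Lemma \ref{l2} with (\ref{7}), (\ref{9}), (\ref{10}) for $(iv)$. The only difference is bookkeeping: you carry the shifted coefficient $\alpha _{\xi \left( n+1\right) }$ explicitly in $(i)$--$(ii)$ where the paper writes $\alpha _{\xi \left( n\right) }$ from the outset, so the final identification of the collapsed bracket with the numerator of (\ref{14}) rests on exactly the parity reconciliation you flag in your closing paragraph.
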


\begin{proof}
$\left( i\right) $ From the relations (\ref{13}) and (\ref{14}), we have,%
\begin{equation*}
\mathbb{K}_{u,n+1}+c\mathbb{K}_{u,n-1}\text{ \ \ \ \ \ \ \ \ \ \ \ \ \ \ \ \
\ \ \ \ \ \ \ \ \ \ \ \ \ \ \ \ \ \ \ \ \ \ \ \ \ \ \ \ \ \ \ \ \ \ \ \ \ \
\ \ \ \ \ \ \ \ \ \ \ \ \ \ \ \ \ \ \ \ \ \ \ \ \ \ \ \ \ \ \ \ \ \ \ \ \ \
\ \ \ \ \ \ \ \ \ \ \ \ }
\end{equation*}%
\begin{equation*}
=\frac{a^{\xi \left( n\right) }}{\left( ab\right) ^{\left\lfloor \frac{n+1}{2%
}\right\rfloor }}\left( \frac{\alpha _{\xi \left( n\right) }\alpha
^{n+1}-\beta _{\xi \left( n\right) }\beta ^{n+1}}{\alpha -\beta }\right) +c%
\frac{a^{\xi \left( n\right) }}{\left( ab\right) ^{\left\lfloor \frac{n-1}{2}%
\right\rfloor }}\left( \frac{\alpha _{\xi \left( n\right) }\alpha
^{n-1}-\beta _{\xi \left( n\right) }\beta ^{n-1}}{\alpha -\beta }\right)
\end{equation*}%
\begin{equation*}
=\frac{a^{\xi \left( n\right) }}{\left( ab\right) ^{\left\lfloor \frac{n}{2}%
\right\rfloor +\xi \left( n\right) }}\left( \frac{\alpha _{\xi \left(
n\right) }\alpha ^{n+1}-\beta _{\xi \left( n\right) }\beta ^{n+1}-\alpha
_{\xi \left( n\right) }\alpha ^{n}\beta +\beta _{\xi \left( n\right) }\beta
^{n}\alpha }{\alpha -\beta }\right) \text{ \ \ \ \ \ \ \ \ \ \ \ \ \ \ \ \ \
\ \ \ \ \ \ \ }
\end{equation*}%
\begin{equation*}
=\frac{a^{\xi \left( n\right) }}{\left( ab\right) ^{\left\lfloor \frac{n}{2}%
\right\rfloor +\xi \left( n\right) }}\left( \frac{\alpha _{\xi \left(
n\right) }\alpha ^{n}\left( \alpha -\beta \right) +\beta _{\xi \left(
n\right) }\beta ^{n}\left( \alpha -\beta \right) }{\alpha -\beta }\right) 
\text{ \ \ \ \ \ \ \ \ \ \ \ \ \ \ \ \ \ \ \ \ \ \ \ \ \ \ \ \ \ \ \ \ \ \ \
\ \ \ \ \ \ \ }
\end{equation*}%
\begin{equation*}
=\frac{a^{\xi \left( n\right) }}{\left( ab\right) ^{\left\lfloor \frac{n}{2}%
\right\rfloor +\xi \left( n\right) }}\left( \alpha _{\xi \left( n\right)
}\alpha ^{n}+\beta _{\xi \left( n\right) }\beta ^{n}\right) =\left( \frac{a}{%
b}\right) ^{\xi \left( n\right) }\mathbb{K}_{v,n}\text{ \ \ \ \ \ \ \ \ \ \
\ \ \ \ \ \ \ \ \ \ \ \ \ \ \ \ \ \ \ \ \ \ \ \ \ \ \ \ \ \ \ \ \ \ \ }
\end{equation*}

$\left( ii\right) $ The proof can be done similarly as $\left( i\right) $ by
using the relations (\ref{13}) and (\ref{14}).

$\left( iii\right) $ By using the Binet formulas for $\mathbb{K}_{u,n}$ and $%
\mathbb{K}_{v,n}$, and considering the relation (\ref{7}), we get the
desired result.

$\left( iv\right) $ By using the Binet formulas for $\mathbb{K}_{u,n}$ and $%
\mathbb{K}_{v,n}$, we have%
\begin{equation*}
\Delta ^{2}\left( \mathbb{K}_{v,n}\mathbb{K}_{v,n}-\mathbb{K}_{u,n}\mathbb{K}%
_{u,n}\right) \text{ \ \ \ \ \ \ \ \ \ \ \ \ \ \ \ \ \ \ \ \ \ \ \ \ \ \ \ \
\ \ \ \ \ \ \ \ \ \ \ \ \ \ \ \ \ \ \ \ \ \ \ \ \ \ \ \ \ \ \ \ \ \ \ \ \ \
\ \ \ \ \ \ \ \ \ \ \ \ \ \ \ \ \ \ \ \ \ \ \ \ \ \ \ \ \ \ \ \ \ \ \ \ \ \
\ \ \ \ \ \ \ \ \ \ }
\end{equation*}%
\begin{eqnarray*}
&=&\frac{a^{2\xi \left( n+1\right) }}{\left( ab\right) ^{2\left\lfloor \frac{%
n}{2}\right\rfloor }} \\
&&\frac{\Delta ^{2}}{a^{2}}\left( \alpha _{\xi \left( n\right) }\alpha _{\xi
\left( n\right) }\alpha ^{2n}+\beta _{\xi \left( n\right) }\beta _{\xi
\left( n\right) }\beta ^{2n}+\left( \alpha \beta \right) ^{n}\left( \alpha
_{\xi \left( n\right) }\beta _{\xi \left( n\right) }+\beta _{\xi \left(
n\right) }\alpha _{\xi \left( n\right) }\right) \right)  \\
&&-\left( \alpha _{\xi \left( n\right) }\alpha _{\xi \left( n\right) }\alpha
^{2n}+\beta _{\xi \left( n\right) }\beta _{\xi \left( n\right) }\beta
^{2n}-\left( \alpha \beta \right) ^{n}\left( \alpha _{\xi \left( n\right)
}\beta _{\xi \left( n\right) }+\beta _{\xi \left( n\right) }\alpha _{\xi
\left( n\right) }\right) \right) \text{ \ \ \ \ \ \ \ \ \ \ \ \ \ \ \ \ \ \
\ \ \ \ \ \ \ \ \ \ \ \ \ \ \ \ \ \ \ \ \ \ \ \ \ }
\end{eqnarray*}%
\begin{eqnarray*}
&=&\frac{a^{2\xi \left( n+1\right) }}{\left( ab\right) ^{2\left\lfloor \frac{%
n}{2}\right\rfloor }}\left( \left( \frac{\Delta ^{2}}{a^{2}}-1\right) \left(
\alpha _{\xi \left( n\right) }\alpha _{\xi \left( n\right) }\alpha
^{2n}+\beta _{\xi \left( n\right) }\beta _{\xi \left( n\right) }\beta
^{2n}\right) \right.  \\
&&\left. +\left( \frac{\Delta ^{2}}{a^{2}}+1\right) \left( \alpha \beta
\right) ^{n}\left( \alpha _{\xi \left( n\right) }\beta _{\xi \left( n\right)
}+\beta _{\xi \left( n\right) }\alpha _{\xi \left( n\right) }\right) \right)
.\text{ \ \ \ \ \ \ \ \ \ \ \ \ \ \ \ \ \ \ \ \ \ \ \ \ \ \ \ \ \ \ \ \ \ \
\ \ \ \ \ \ \ \ \ \ \ \ \ \ \ \ \ \ \ \ \ \ \ \ \ \ \ \ \ \ \ \ \ \ \ \ \ \
\ \ }
\end{eqnarray*}%
By considering the relations (\ref{7}), (\ref{9}), and (\ref{10}), we get
the desired result.
\end{proof}

\section{Conclusion}

In recent years, many studies have been devoted to investigate the hybrid
numbers whose components are from the special number sequences such as
Fibonacci, Lucas, Pell, Horadam numbers, etc. This work provides a new
generalization for hybrid numbers whose coefficients are from the Horadam
numbers. Most of the results of this study generalize the results of those
were given in \cite{szynal3, morales, senturk}. It would also be interesting
to study the algebraic structure of these new hybrid numbers.


\begin{thebibliography}{99}
\bibitem{bilgici} Bilgici, G. Two generalizations of Lucas sequence. Appl.
Math. Comput. 245 (2014), 526-538.

\bibitem{catarino} Catarino, P. On k-pell hybrid numbers. J Discrete Math
Sci Cryptography 22(1) (2019), 83-9.

\bibitem{edson} Edson, M. and Yayenie, O. A new generalizations of Fibonacci
sequences and extended Binet's Formula, Integers 9 (2009), 639-654.

\bibitem{halici} Halici, S., Karatas, A. On a Generalization for Quaternion
Sequences,Chaos Solitons Fractals, 98 (2017),178-182.

\bibitem{hamilton} Hamilton W.R. Lectures on quaternions. Hodges and Smith.
Dublin; 1853.

\bibitem{horadam} Horadam, A.F. Complex Fibonacci numbers and Fibonacci
quaternions, Amer. Math. Montly, 70 (1963), 289-291.

\bibitem{horadam1} A.F. Horadam, Basic properties of a certain generalized
sequence of numbers, The Fibonacci Quarterly 3.3 (1965) 161--176.

\bibitem{morales} Moreles, G.C. Investigation of generalized hybrid
Fibonacci numbers and their properties. 2018. arXiv: 1806.02231.

\bibitem{ozdemir} Ozdemir, M. Introduction to Hybrid Numbers\textit{.}
Advances in Aplied Clifford Algebras 26 (2016), 441-447.

\bibitem{sahin} Sahin, M. The Gelin-Cesaro identity in some conditional
sequences, Hacet. J. Math. Stat. 40(6) (2011), 855-861.

\bibitem{senturk} Senturk T.D., Bilgici G., Dasdemir A., Unal Z. A Study on
Horadam Hybrid Numbers. Turk J Math (accepted)

\bibitem{szynal} Szynal-Liana, A., Wloch I., The Fibonacci hybrid numbers,
Utilitas Math. 110 (2019), 3-10.

\bibitem{szynal3} Szynal-Liana, A. The horadam hybrid numbers. Discuss Math
Gen Algebra Appl 38(1) (2018), 91-98.

\bibitem{szynal2} Szynal-Liana, A., Wloch, I. On jacosthal and
jacosthal-lucas hybrid numbers. Ann Math Sil (2018) doi: 10.2478/amsil-
2018- 0 0 09.

\bibitem{szynal1} Szynal-Liana, A. , Wloch, I. On pell and pell-lucas hybrid
numbers. Commentat Math. 58 (2018), 11-17.

\bibitem{tan} Tan, E. and Leung, H-H. A note on congruence properties of the
generalized bi-periodic Horadam sequence. Hacettepe Journal of Mathematics
and Statistics (accepted).

\bibitem{tan1} Tan, E. and Leung, H-H. Some basic properties of the
generalized bi-periodic Fibonacci and Lucas sequences, Advances in
Difference Equations, 26 (2020).

\bibitem{yayenie} Yayenie O. A note on generalized Fibonacci sequence, Appl.
Math. Comput. 217 (2011), 5603-5611.
\end{thebibliography}
\end{document}